\newcounter{defcounter}
\theoremstyle{plain}
\newtheorem{theorem}{Theorem}
\newtheorem{proposition}[theorem]{Proposition}
\newtheorem{lemma}[theorem]{Lemma}
\newtheorem*{theoremnn}{Theorem}
\newtheorem{proposition.definition}[theorem]{Proposition/Definition}
\newtheorem{theoremalpha}{Theorem}
\newtheorem*{lemmann}{Lemma}
\theoremstyle{definition}
\newtheorem*{examplenn}{Example}
\newtheorem{remark}[theorem]{Remark}
\newtheorem{example}[theorem]{Example}
\newcommand{\lra}{\longrightarrow}
\newcommand{\noi}{\noindent}
\newcommand{\PP}{\mathbf{P}}
\newcommand{\CC}{\mathbf{C}}
\newcommand{\Zero}[1]{\textnormal{Zeroes} ({#1})}
\newcommand{\OO}{\mathcal{O}}
\newcommand{\II}{\mathcal{I}}
\newcommand{\frb}{\mathfrak{b}}
\newcommand{\Image}{\textnormal{Im}}
\newcommand{\GGG}[2]{\Gamma \big( {#1},{#2}\big)}
\newcommand{\HH}[3]{H^{{#1}} \big( {#2} , {#3}
\big) }
\newcommand{\bHH}[3]{\mathbf{H}^{{#1}} \big( {#2} , {#3}
\big) }
\newcommand{\MI}[1]{\mathcal{J} \big( {#1}
\big) }
\newcommand{\coker}{\textnormal{coker}}
\newcommand{\MMI}[2]{\MI{ {#1} \, , \, {#2}}}
\newcommand{\pr}{\prime}
\newcommand{\lin}{\equiv_{\text{lin}}}
\newcommand{\dra}{\dashrightarrow}
\newcommand{\Bl}{\text{Bl}}
\newcommand{\Linser}[1]{| \mspace{1.5mu} {#1}
\mspace{1.5mu} |}
\newcommand{\linser}[1]{\Linser{  {#1}  }}
\newcommand{\pro}{\textnormal{pr}}
\newcommand{\res}{\textnormal{res}}
\numberwithin{theorem}{section}
\begin{document}

\title{Cayley-Bacharach Theorems with Excess Vanishing}
 \author{Lawrence Ein}
 \address{Department of Mathematics, University of Illinois at Chicago, 851 South Morgan St., Chicago, IL  60607}
  \email{{\tt ein@uic.edu}}
  \thanks{Research of the first author partially supported by NSF grant DMS-1801870.}
 
 \author{Robert Lazarsfeld}
  \address{Department of Mathematics, Stony Brook University, Stony Brook, New York 11794}
 \email{{\tt robert.lazarsfeld@stonybrook.edu}}
 \thanks{Research  of the second author partially supported by NSF grant DMS-1739285.}

\dedicatory{Dedicated to Bill Fulton on the occasion of his 80th birthday.}

\maketitle

 \section*{Introduction}
 
A classical theorem of Cayley and Bacharach asserts that if $D_1, D_2 \subseteq \PP^2$ are curves of degrees $d_1$ and $d_2$ meeting transversely,  then any curve of degree $d_1 + d_2 - 3$ passing through all but one of the $d_1d_2$ points of $D_1 \cap D_2$ must also contain the remaining point.  Generalizations of this statement have been a source of fascination for decades. Algebraically, the essential point is that complete intersection quotients of a polynomial ring are Gorenstein. We refer the reader to \cite[Part I]{EGH} for a detailed overview.

The most natural geometric setting for results of this sort was introduced in the paper \cite{GH} of Griffiths and Harris. Specifically, let $X$ be a smooth complex projective variety of dimension $n$,  let $E$ be a vector bundle on $X$ of rank $n$, and set $L = \det E$. Suppose given a section $s \in \GGG{X}{E}$ that vanishes simply along a finite set $Z \subseteq X$. Griffiths and Harris prove that if
\[ h \, \in \, \GGG{X}{\OO_X(K_X + L)} \]
vanishes at all but one of the points of $Z$, then it vanishes at the remaining point as well. This of course implies   statements for hypersurfaces in projective space by taking $E$ to be a direct sum of line bundles.

The starting point of the present note was the paper \cite{Li} of Mu-Lin Li, who proposed an extension allowing for excessive vanishing. With $X$, $E$ and $L$ as above, suppose that $s \in \GGG{X}{E}$ is a section that vanishes scheme-theoretically along a smooth subvariety $W \subseteq X$ of dimension $w \ge 0$ in addition to a non-empty reduced finite set $Z \subseteq X$:
\[
\Zero{s} \ = \ W \sqcup Z. 
\]
For example, one might imagine three surfaces in $\PP^3$ cutting out the union of a smooth curve and a finite set. Assuming for simplicity that $W$ is irreducible, its normal bundle $N_{W/X}$ sits naturally as a sub-bundle of the restriction $E\, | \, W$, giving rise to an exact sequence
\[  0 \lra N_{W/X} \lra E\, |  \, W \lra V \lra 0, \tag{*} \]
where $V$ is a vector bundle of rank $w $ on $W$. Li's result is the following:
\begin{theoremnn}[\cite{Li}, Corollary 1.3]
Assume that the exact sequence $(*)$ splits. Then any section of $\OO_X(K_X + L)$ vanishing on $W$ and at all but one of the points of $Z$ vanishes also on the remaining point of $Z$.
\end{theoremnn}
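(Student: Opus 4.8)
The plan is to reduce the statement to the vanishing of a global trace over $Z$, and then to use the splitting of $(*)$ to show that the contribution of $W$ to this trace decouples from that of $Z$. First I would reformulate the Cayley--Bacharach conclusion cohomologically. Write $M = K_X + L = K_X + \det E$. Serre duality on the Gorenstein zero-dimensional scheme $Z$ identifies $\HH{0}{Z}{\omega_Z}$ with the dual of $\HH{0}{Z}{\OO_Z}$, and under this identification the trace map $\operatorname{tr}_Z \colon \HH{0}{Z}{\omega_Z} \to \CC$ pairs $h|_Z$ with the function $1$, so that $\operatorname{tr}_Z(h|_Z)$ is a nonzero multiple of the value of $h$ at $p$ whenever $h$ vanishes on $Z \setminus \{p\}$. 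Thus it suffices to prove that for every $h \in \GGG{X}{M}$ vanishing along $W$, one has $\operatorname{tr}_Z(h|_Z) = 0$: applied to a section vanishing on $W$ and on $Z \setminus \{p\}$, this forces the value at $p$ to vanish.

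To access this trace globally I would use the Koszul complex of $s$. Twisting $\wedge^\bullet E^\vee$ by $M$ and using $\wedge^n E^\vee \otimes M = K_X$ gives a complex
\[ 0 \lra K_X \lra \wedge^{n-1}E^\vee \otimes M \lra \cdots \lra E^\vee \otimes M \lra M \lra 0 \]
whose cohomology sheaves I would read off from the local structure of $s$ near $Y = W \sqcup Z$. Away from $W$ the section is regular of codimension $n$, so the only cohomology is $\OO_Z \otimes M = \omega_Z$ in degree $0$; near $W$ the Koszul complex degenerates into the local product of the (regular) Koszul complex of $N_{W/X}$ with the trivial-differential complex $\wedge^\bullet V^\vee$, producing excess sheaves $\wedge^i V^\vee \otimes M|_W$ on $W$ in degree $-i$. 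The load-bearing bookkeeping is the adjunction identity $M|_W = K_X|_W \otimes \det(E|_W) = \big(\omega_W \otimes (\det N_{W/X})^\vee\big)\otimes\big(\det N_{W/X}\otimes \det V\big) = \omega_W \otimes \det V$, which shows that the bottom excess sheaf is $\wedge^w V^\vee \otimes M|_W = (\det V)^\vee \otimes \omega_W \otimes \det V = \omega_W$, precisely paralleling the bottom term $K_X$ of the complex. Hence $Z$ and $W$ each carry a one-dimensional ``trace space'', $\HH{0}{Z}{\omega_Z}\to\CC$ and $\HH{w}{W}{\omega_W} = \CC$, both feeding into $\HH{n}{X}{K_X} = \CC$.

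Next I would run the hypercohomology spectral sequence of the complex, or equivalently chase the short exact sequences of cycles and boundaries, to obtain the excess analogue of the global residue theorem: the edge map carrying $\GGG{X}{M} \to \HH{0}{Y}{\OO_Y \otimes M} = \HH{0}{Z}{\omega_Z} \oplus \HH{0}{W}{M|_W}$ into $\HH{n}{X}{K_X} = \CC$ annihilates the image of the global sections, because that map factors through sections of $M$ on all of $X$. Written out, the total trace of $h$ is the sum of the $Z$-contribution $\operatorname{tr}_Z(h|_Z)$ and a $W$-contribution assembled from the excess sheaves $\wedge^i V^\vee \otimes M|_W$, and the relation says these cancel.

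The crux—and what I expect to be the main obstacle—is to show that when $h$ vanishes along $W$ the $W$-contribution already vanishes on its own, so that the cancellation collapses to $\operatorname{tr}_Z(h|_Z) = 0$. This is exactly where the hypothesis that $(*)$ splits is used: a splitting $E|_W \cong N_{W/X} \oplus V$ globalizes the local product decomposition of the Koszul complex near $W$, so the excess sheaves split off as genuine direct summands and the spectral-sequence differentials that would otherwise entangle the $W$-data with the $Z$-trace vanish. I expect this decoupling to make the $W$-contribution factor through the restriction $h \mapsto h|_W \in \HH{0}{W}{M|_W}$ alone, so that $h|_W = 0$ kills it and the total-trace identity reduces to the desired $\operatorname{tr}_Z(h|_Z) = 0$. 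The delicate work is not the formal setup but the precise identification of the connecting maps in terms of the extension class of $(*)$, together with the verification that a splitting kills exactly the differential responsible for the $W/Z$ interaction; controlling those maps is where the real effort lies.
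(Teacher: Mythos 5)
The first thing to note is that the paper does not prove this statement at all: it is quoted from Li's paper \cite{Li}, whose proof is analytic (via ``virtual residues''), and the paper's own excess results (Theorem \ref{ExcessCB.Intro.A} and Theorem \ref{Excess.CB.1}) are proved by a completely different route --- passing to a log resolution and invoking the classical Griffiths--Harris theorem there --- and have genuinely different hypotheses: order-$(w+1)$ vanishing along $W$ with \emph{no} splitting assumption. So there is no proof in the paper to match yours against; what can be assessed is your argument on its own terms. Your formal skeleton is sound and is indeed the natural algebraic shape of such a proof: the cohomology sheaves of the twisted Koszul complex are the excess bundles $\Lambda^i V^* \otimes M|_W$ in degree $-i$, the adjunction computation $M|_W = \omega_W \otimes \det V$ is correct, and a ``total trace'' identity asserting that the $Z$-trace and a $W$-contribution sum to zero on the image of $\GGG{X}{M}$ is exactly what one expects from hypercohomology of the (now non-exact) complex.

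The genuine gap is that the step you yourself flag as the crux --- that a splitting of $(*)$ forces the $W$-contribution to factor through $h \mapsto h|_W$, so that $h|_W = 0$ kills it --- is the entire content of the theorem, and it is asserted as an expectation rather than proved. This decoupling is demonstrably not formal: the paper's twisted-cubic example exhibits a section $h$ of $\OO_X(K_X+L)$ vanishing on $W=C$ and at all but one point of $Z$ yet not at the last point, so there the $W$-contribution to your trace identity is \emph{nonzero} even though $h|_W = 0$. In other words, in general the $W$-contribution depends not on $h|_W$ but on the $1$-jet of $h$ along $W$, entering through the connecting maps (equivalently the spectral-sequence differentials, or the Postnikov extension data binding the sheaves $\Lambda^i V^* \otimes M|_W$ into the Koszul complex). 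Any correct proof must therefore identify those maps explicitly in terms of the extension class of $(*)$ and show that its vanishing kills precisely the relevant differentials; none of that identification is carried out. There is also a subsidiary globalization issue you pass over: a splitting of the bundle sequence $(*)$ lives on $W$, whereas the Koszul complex lives on $X$, so the local product decomposition near $W$ does not automatically globalize from a splitting of $E|_W$ alone --- one must show the splitting controls the derived-category extension data (or extend it to a formal neighborhood of $W$). As it stands, the proposal reconstructs the setup but stops exactly where the theorem begins.
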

\noi His argument is analytic in nature, using what he calls ``virtual residues."

It is natural to ask whether the statement remains true without assuming the splitting of (*). The following example shows that this is  not the case.
\begin{examplenn}
Let $C \subseteq \PP^3$ be a rational normal cubic curve, and fix general surfaces
\[      Q_1\, , \, Q_2 \, , \, F \ \supseteq \ C \]
containing $C$,  with  $\deg Q_1 = \deg Q_2 = 2$ and $\deg F = d > 2$. Recall that $Q_1 \cap Q_2 = C \cup L$ where $L$ is a line meeting $C$ at two points $a_1, a_2 \in L$. \begin{figure}
\includegraphics[scale = 1]{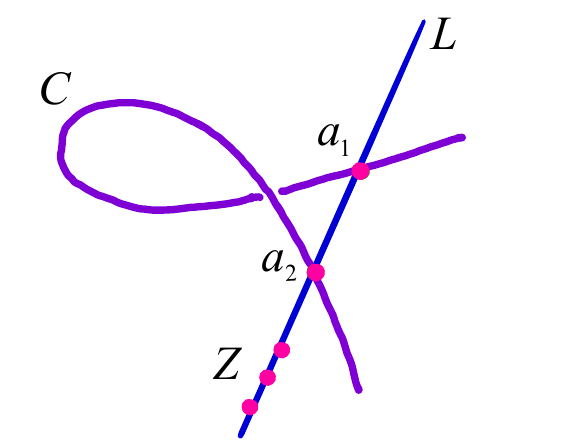}
\vskip -.1in
\caption{Twisted cubic and secant line: $C \cup L  = Q_1 \cap Q_2$}\label{CBExample}
\end{figure}
Therefore 
\[   Q_1 \cap Q_2 \cap F \ = \ C \, \sqcup Z, \]
where $Z$ consists of the $(d-2)$ additional points of intersection of $F$ with $L$. (See Figure \ref{CBExample}.) 
The conclusion of Li's theorem would be that any surface $H$ of degree $2 + 2 + d - 4 = d$ passing through $C$ and all but one of the points of $Z$ passes through the remaining one. However this need not happen: for instance, one can take $H$ to be the union of a general cubic through $C$ and $(d-3)$ planes each passing through exactly one of the points of $Z$. \qed
\end{examplenn}

On the other hand, staying in the setting of the Example, suppose that $H \subseteq \PP^3$ is a surface of degree $d$ that passes \textit{doubly} through $C$ and in addition contains all but one of the points of $Z$. Then $H$ meets $L$ twice at $a_1$ and $ a_2$ as well as  at $(d-3)$ other points, and therefore $H \supseteq L$. In other words, in this case the conclusion of Li's theorem does hold if one looks at surfaces that have multiplicity $\ge 2$ along $C$. This is an illustration of our first general result.

\begin{theoremalpha}\label{ExcessCB.Intro.A}
With $X$ and $E$ and $L = \det E$ as above, consider a section  $s \in \GGG{X}{E}$ with
\[
\Zero{s} \ =_{\textnormal{scheme-theoretically}} \ W \, \sqcup \, Z, 
\]
where $W$ is smooth of dimension $w$ and $Z$ is a non-empty reduced finite set. Suppose  that 
\[   h \, \in \, \GGG{X}{\OO_X(K_X + L) \otimes I_{W}^{w+1}}\]
is a section of $K_X + L$ vanishing to order $(w+1)$ along $W$, as well as at all but one of the points of $Z$. Then $h$ vanishes also at the remaining point of $Z$. 
 \end{theoremalpha}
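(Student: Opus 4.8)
The plan is to eliminate the excess component $W$ by passing to the blow-up, thereby reducing the assertion to the finite (transverse) case already recorded in the Introduction, namely the theorem of Griffiths and Harris. Let $\pi \colon \widetilde{X} \to X$ be the blow-up of $X$ along $W$, with exceptional divisor $G = \PP(N_{W/X})$; since $W$ is smooth of codimension $n-w$ one has $K_{\widetilde{X}} = \pi^* K_X + (n-w-1)\,G$. I would then work on $\widetilde{X}$ with the rank-$n$ bundle $E' := \pi^* E \otimes \OO_{\widetilde{X}}(-G)$, whose determinant satisfies $\det E' = \pi^* L - nG$ as divisor classes. Because $s$ vanishes on $W$, its pullback lies in $\GGG{\widetilde{X}}{\pi^* E \otimes \OO_{\widetilde{X}}(-G)}$ and so defines a section $s' \in \GGG{\widetilde{X}}{E'}$; away from $G$ this is just $s$, so the part of $\Zero{s'}$ lying in $\widetilde{X}\setminus G$ is exactly the reduced finite set $Z$.

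The key point—and the step I expect to be the main obstacle—is to verify that $s'$ does not vanish anywhere on $G$, so that $\Zero{s'} = Z$ is finite and reduced. This is where the hypothesis enters. The scheme-theoretic equality $\Zero{s} = W \sqcup Z$ forces the components of $s$ to generate $I_W$, which is precisely what produces the sub-bundle inclusion $N_{W/X} \hookrightarrow E\,|\,W$ appearing in $(*)$. The restriction $s'|_G$ is then the tautological map determined by this inclusion: over a normal direction $[\nu] \in \PP(N_{W/X})$ it records the image of $\nu$ in $E\,|\,W$. Since $N_{W/X} \to E\,|\,W$ is injective \emph{as a bundle map}, no nonzero $\nu$ is sent to $0$, and hence $s'|_G$ is nowhere zero. (This is exactly the place where one sees that the splitting of $(*)$ assumed by Li is not needed here—only injectivity is used.) Consequently $s'$ is a section of the rank-$n$ bundle $E'$ vanishing simply along the finite reduced set $Z$, and the Griffiths–Harris theorem applies to the triple $(\widetilde{X}, E', s')$.

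It remains to match the linear systems and transport the conclusion back to $X$. Combining the two computations above gives the class identity $K_{\widetilde{X}} + \det E' = \pi^*(K_X + L) - (w+1)\,G$, which is precisely what singles out the multiplicity $w+1$. Indeed, for $h \in \GGG{X}{\OO_X(K_X+L) \otimes I_W^{w+1}}$ one has $\pi^*(I_W^{w+1})\cdot \OO_{\widetilde{X}} = \OO_{\widetilde{X}}(-(w+1)G)$, so the pullback $\widetilde{h} := \pi^* h$ vanishes to order $\ge w+1$ along $G$ and therefore lies in $\GGG{\widetilde{X}}{\OO_{\widetilde{X}}(K_{\widetilde{X}} + \det E')}$. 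Since $\pi$ is an isomorphism near $Z$ (because $Z \cap W = \varnothing$), the section $\widetilde{h}$ vanishes at a point of $Z$ exactly when $h$ does; in particular $\widetilde{h}$ vanishes at all but one point of $Z$. Applying Griffiths–Harris on $\widetilde{X}$ forces $\widetilde{h}$ to vanish at the remaining point as well, whence so does $h$. I would close by remarking that the only genuinely substantive input is the nowhere-vanishing of $s'|_G$ treated in the second paragraph; the discrepancy formula and the pullback $\pi^*(I_W^{w+1})\cdot\OO_{\widetilde{X}} = \OO_{\widetilde{X}}(-(w+1)G)$ are the standard facts for the blow-up of a smooth center.
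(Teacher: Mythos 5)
Your proof is correct and is essentially the paper's own argument: the paper deduces Theorem \ref{ExcessCB.Intro.A} from the multiplier-ideal statement (Theorem \ref{Excess.CB.1}), whose proof, specialized to $\frb = I_W$, is exactly your construction --- blow up $W$ (the log resolution), twist to $E' = \pi^*E \otimes \OO_{\widetilde X}(-G)$, check that the pulled-back section vanishes exactly on $Z$, and apply Griffiths--Harris, with your discrepancy count $K_{\widetilde X} + \det E' = \pi^*(K_X+L) - (w+1)G$ corresponding to the computation $\MI{I_W^n} = I_W^{w+1}$ of Example \ref{MI.Smooth.Vars}. The only cosmetic difference is that the paper verifies $\Zero{s'} = Z$ ideal-theoretically, by pulling back the image ideal $\frb \cdot \II_Z$ to $\OO_{\widetilde X}(-G)\cdot \II_{Z}$, rather than by your fibrewise injectivity argument for $N_{W/X} \hookrightarrow E\,|\,W$ on the exceptional divisor.
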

 \noi Note that if $\dim W = 0$, this reduces to the classical result.  See also Example \ref{Li-Type.Statement} for an application to statements closer to the spirit of \cite{Li}. 
 
 Theorem \ref{ExcessCB.Intro.A} is a special case of a more general result involving multiplier ideals. Continuing to keep $X, E$ and $L$ as before, consider a section $s \in \GGG{X}{E}$ that vanishes simply along a non-empty finite set $Z \subseteq X$ and arbitrarily along a scheme disjoint from $Z$ defined by an ideal $\frb \subseteq \OO_X$. In other words, we ask that that the image of the map
 \[    E^*   \lra \OO_X \]
 defined by $s$ be the ideal $\frb \cdot I_Z$, with $\frb + I_Z = \OO_X$. One can associate to $\frb$ and its powers \textit{multiplier ideals} $\MI{\frb^m} = \MMI{X}{\frb^m} \subseteq \OO_X$ that measure in a somewhat delicate way the singularities of elements of $\frb$. We prove:
 \begin{theoremalpha} \label{ExcessCB.Intro.B}
 Let 
 \[  h \, \in \, \Gamma \Big( \OO_X(K_X + L) \otimes \MI{\frb^n} \Big) \]
 be a section of $\OO_X(K_X + L)$ vanishing along the multiplier ideal $\MI{\frb^n}$, and suppose that $h$ vanishes at all but one of the points of $Z$. Then it vanishes also at the remaining point. 
  \end{theoremalpha}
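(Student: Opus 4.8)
The plan is to reduce Theorem \ref{ExcessCB.Intro.B} to the Griffiths--Harris theorem recalled in the introduction, by passing to a log resolution of $\frb$ on which the excess vanishing of $s$ becomes purely divisorial. First I would choose a log resolution $\mu \colon Y \lra X$ of the ideal $\frb$ which is an isomorphism over a neighborhood of $Z$; this is possible precisely because $\frb + I_Z = \OO_X$, so that $Z$ is disjoint from the locus where $\mu$ modifies anything. Write $\frb \cdot \OO_Y = \OO_Y(-F)$ for the effective integral divisor $F$ principalizing $\frb$, so that by the very definition of the multiplier ideal one has $\MI{\frb^n} = \mu_* \OO_Y(K_{Y/X} - nF)$.

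Next I would produce from $s$ a section on $Y$ that vanishes \emph{simply} along a finite set. The image of the map $(\mu^{*}E)^{*} \lra \OO_Y$ induced by $\mu^*s$ is $(\frb \cdot I_Z)\OO_Y = \OO_Y(-F) \cdot I_{\widetilde Z}$, where $\widetilde Z \cong Z$ denotes the preimage of $Z$. In particular $\mu^{*}s$ vanishes along the divisor $F$, so it may be divided by the section cutting out $F$ to yield a section $\bar s \in \GGG{Y}{E'}$ of the rank-$n$ bundle $E' := \mu^{*}E \otimes \OO_Y(-F)$. By construction the zero scheme of $\bar s$ is exactly $\widetilde Z$, and since $\mu$ is an isomorphism near $Z$ it vanishes simply there. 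Griffiths--Harris then applies on $Y$ to $E'$ and $\bar s$: setting $L' = \det E' = \mu^{*}L \otimes \OO_Y(-nF)$, any section of $\OO_Y(K_Y + L')$ vanishing at all but one point of $\widetilde Z$ vanishes at the remaining point as well.

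Finally I would translate this statement back down to $X$. Since $K_Y = \mu^{*}K_X + K_{Y/X}$ and $\rk E = n$, one computes $K_Y + L' = \mu^{*}(K_X + L) + (K_{Y/X} - nF)$; pushing forward and using the projection formula together with the definition of $\MI{\frb^n}$ gives $\mu_* \OO_Y(K_Y + L') = \OO_X(K_X+L) \otimes \MI{\frb^n}$, and hence an identification of $\HH{0}{Y}{\OO_Y(K_Y+L')}$ with $\GGG{X}{\OO_X(K_X+L) \otimes \MI{\frb^n}}$. Because $\mu$ is an isomorphism near $Z$, and $F$ and $K_{Y/X}$ are trivial there, a section $h$ as in the theorem and its transform on $Y$ have the same vanishing behavior at the points of $Z \cong \widetilde Z$. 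The conclusion then follows immediately from the statement already proved on $Y$.

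The step I expect to be the crux is verifying that after dividing by $F$ the residual section $\bar s$ vanishes \emph{simply} along the finite set $\widetilde Z$ and nowhere else: all of the excess, positive-dimensional vanishing of $s$ along $V(\frb)$ must be absorbed into the divisor $F$, which is exactly what principalization of $\frb$ guarantees. It is also worth highlighting the role of the hypothesis $\rk E = n$, since it is this that makes $\det(\mu^{*}E \otimes \OO_Y(-F))$ twist $\mu^{*}L$ by $\OO_Y(-nF)$, forcing the exponent $\frb^{n}$ to appear and the relevant ideal to be $\MI{\frb^n}$; this matches, when $\frb = I_W$ for smooth $W$ of codimension $n-w$, the ideal $I_W^{w+1}$ of Theorem \ref{ExcessCB.Intro.A}.
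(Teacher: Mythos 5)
Your proposal is correct and follows essentially the same route as the paper: pass to a log resolution $\mu$ of $\frb$ that is an isomorphism near $Z$, divide $\mu^*s$ by the divisor $F$ principalizing $\frb$ to get a section of $E' = \mu^*E \otimes \OO_Y(-F)$ vanishing simply on the preimage of $Z$, apply Griffiths--Harris there, and descend via $\mu_*\OO_Y(K_{Y/X} - nF) = \MI{\frb^n}$. The canonical bundle computation, the role of $\rk E = n$ in producing the exponent $n$, and the use of the isomorphism near $Z$ all match the paper's argument for Theorem \ref{Excess.CB.1}.
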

  \noi If $\frb = I_W$ is the ideal sheaf of a smooth subvariety of dimension $w$, then $\MI{\frb^n} = I_W^{w+1}$, yielding Theorem \ref{ExcessCB.Intro.A}. We remark that it is not essential that $s$ vanish simply along the finite set $Z$, but then one has to reformulate (in a well-understood   manner) what it means for $h$ to vanish at all but one of the points of $Z$: see Remarks \ref{Non.Reduced.Zeroes} and \ref{Non.Red.Excess}.  
  
   Theorem \ref{ExcessCB.Intro.B} follows almost immediately from the classical statement, but at the risk of making the result seem more subtle than it is let us explain conceptually why one expects multiplier ideals to enter the picture. When $\Zero{s} = Z$ is a finite set, one can think of Cayley-Bacharach as arising via duality from the exactness of the Koszul  complex
   \[  0 \lra \Lambda^n E^*  \lra \Lambda^{n-1} E^* 
   \lra \ldots \lra \Lambda^2 E^*   \lra E^*   \lra \II_Z \lra 0 \tag{Kos} \]  
  determined by $s$. (See \S1 for a review of the argument, which is due to Griffiths--Harris.) If $s$ vanishes excessively this complex is no longer exact, which is why -- as in the example above -- the most naive analogue of Cayley-Bacharach fails. However   (Kos) contains a subcomplex involving multiplier ideals that  always is exact:
 \[ 0 \lra \Lambda^n E^* \lra \Lambda^{n-1} E^* \otimes \MI{\frb} \lra \ldots \lra  E^* \otimes \MI{\frb^{n-1}} \lra \MI{\frb^n} \cdot \II_Z \lra 0. \tag{Skod} \]
 (This is essentially the \textit{Skoda complex} introduced in \cite{EL}: see Example \ref{Skoda.Complex} below.) 
 One can view Theorem  \ref{ExcessCB.Intro.B} as coming from (Skod) in much the same way that the classical result arises from (Koz). 
 
There are  variants of the Griffiths--Harris theorem that also extend to the setting of excess vanishing. As above, let $s \in \GGG{X}{E} $ be a section that vanishes simply on a finite set $Z$. Tan and Viehweg \cite{TV} in effect prove the following:
\begin{theoremnn} 
Fix an arbitrary line bundle $A$ on $X$, and 
write $Z = Z_1 \sqcup Z_2$ as the  union of two disjoint non-empty subsets. Set\[ \begin{gathered}
v_1 \ = \ \dim  \coker \Big( H^0(A) \lra H^0\big(A \otimes \OO_{Z_1}\big) \Big) \\
v_2 \ = \ \dim \coker \Big( H^0\big( I_Z (K_X + L - A)\big) \hookrightarrow H^0\big(I_{Z_2}(K_X + L - A) \big) \Big),
\end{gathered}
\]
so that $v_1$ measures the failure of $Z_1$ to impose independent conditions on $H^0(A)$, while $v_2$ counts the number of sections of $\OO_X(K_X + L - A)$ vanishing on $Z_2$ but not on $Z_1$. 
Then
\begin{equation}\label{Viehweg.Tan.Eqn.1}
v_2 \, \le \, v_1.  \end{equation}
\end{theoremnn}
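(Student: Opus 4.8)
The plan is to read off both $v_1$ and $v_2$ from the single finite scheme $Z_1$, and to show that, under a residue pairing built from the simple vanishing of $s$, the relevant subspaces sit in orthogonal position. Set $B = \OO_X(K_X + L)\otimes A^{-1}$, so that $A\otimes B \cong \OO_X(K_X+L)$ and $\HH{0}{X}{I_{Z_2}\otimes B} = \HH{0}{X}{I_{Z_2}(K_X+L-A)}$. Since $Z = Z_1 \sqcup Z_2$ is reduced, at each $p\in Z_1$ the fibers $A_p$ and $B_p$ are lines with $A_p\otimes B_p = \OO_X(K_X+L)_p$. Because $s$ vanishes simply along $Z$, the derivative $(ds)_p\colon T_pX \to E_p$ is an isomorphism; taking top exterior powers yields a canonical isomorphism $(K_X)_p^{-1}\xrightarrow{\ \sim\ } L_p$, equivalently a trivialization $\res_p\colon \OO_X(K_X+L)_p \xrightarrow{\ \sim\ }\CC$. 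Summing over $Z_1$ produces a perfect pairing
\[
\Big(\textstyle\bigoplus_{p\in Z_1} A_p\Big)\times \Big(\textstyle\bigoplus_{p\in Z_1} B_p\Big)\lra \CC,
\qquad \big((a_p),(b_p)\big)\longmapsto \sum_{p\in Z_1}\res_p(a_p\otimes b_p).
\]

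Next I would introduce the two restriction maps to the fibers over $Z_1$,
\[
\alpha\colon \HH{0}{X}{A}\lra \textstyle\bigoplus_{p\in Z_1} A_p,
\qquad
\beta\colon \HH{0}{X}{I_{Z_2}\otimes B}\lra \textstyle\bigoplus_{p\in Z_1} B_p,
\]
and identify $v_1$ and $v_2$ with invariants of these maps. By definition $v_1 = \dim\coker\alpha$. For $\beta$, since $Z_1$ and $Z_2$ are disjoint, the restriction to $Z_1$ of a section of $B$ vanishing along $Z_2$ is unobstructed, and $\ker\beta$ consists precisely of the sections of $B$ vanishing on both $Z_2$ and $Z_1$, i.e. on all of $Z$. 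Hence $\Image\beta \cong \HH{0}{X}{I_{Z_2}\otimes B}/\HH{0}{X}{I_Z\otimes B}$, whose dimension is exactly $v_2$.

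The key step is the orthogonality $\Image\beta \subseteq (\Image\alpha)^{\perp}$. Given $\phi\in \HH{0}{X}{A}$ and $\psi\in \HH{0}{X}{I_{Z_2}\otimes B}$, the product $h=\phi\cdot\psi$ lies in $\HH{0}{X}{\OO_X(K_X+L)}$ and vanishes along $Z_2$. Here I invoke the global residue theorem of Griffiths--Harris reviewed in \S1, which asserts that $\sum_{p\in Z}\res_p\big(h(p)\big)=0$ for every section $h$ of $\OO_X(K_X+L)$. Since $h$ vanishes on $Z_2$, the terms with $p\in Z_2$ drop out, leaving
\[
\langle \alpha(\phi),\beta(\psi)\rangle \ = \ \sum_{p\in Z_1}\res_p\big(\phi(p)\otimes\psi(p)\big)\ =\ 0,
\]
which is the asserted orthogonality. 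Because the pairing above is perfect, $\dim(\Image\alpha)^{\perp} = \dim\coker\alpha$, and therefore
\[
v_2 \ =\ \dim\Image\beta \ \le\ \dim(\Image\alpha)^{\perp}\ =\ \dim\coker\alpha \ =\ v_1 .
\]

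I expect the only genuine content to be the global residue theorem used in the third step: it is exactly what forces the residue pairing to annihilate products of global sections, and it is the classical Koszul-duality input established in \S1. Everything else is bookkeeping --- the reinterpretation of $v_1$ as a cokernel and $v_2$ as the image of $\beta$ over $Z_1$, together with the elementary fact that a subspace orthogonal to $\Image\alpha$ has dimension at most $\dim\coker\alpha$. It is worth noting that the inequality (rather than equality $v_2=v_1$) is the natural output here: equality would require the sharper assertion that $\Image\alpha$ and $\Image\beta$ are exact mutual annihilators, i.e. that $H^0(A)$ and $H^0(I_{Z_2}\otimes B)$ surject onto the full orthogonal complements, which the residue theorem alone does not provide.
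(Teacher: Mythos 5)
Your proof is correct, but it takes a genuinely different route from the paper's. The paper tensors the Koszul complex by $\OO_X(-A)$, extracts the subcomplex
\[
H^0 \big( \OO_X(K_X + L-A)\otimes I_{Z_2} \big) \overset{\res_1}{\lra} H^0 \big( \OO_{Z_1}(K_X + L-A)\big)\overset{\delta_1}{\lra} H^n \big(\OO_X(K_X-A)\big),
\]
identifies $v_2 = \dim \Image(\res_1)$ exactly as you identify $v_2 = \dim\Image\beta$, and then invokes a duality lemma (proved in the appendix via Grothendieck--Verdier duality) \emph{with coefficients in the arbitrary line bundle $A$} to conclude that $\ker \delta_1 \cong V_1^*$. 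Your argument has the same skeleton --- under duality, $\ker\delta_1$ is precisely the annihilator $(\Image\alpha)^{\perp}$, and your orthogonality $\Image\beta \subseteq (\Image\alpha)^{\perp}$ is the statement that the displayed sequence is a complex --- but your justification is different: you need only the \emph{untwisted} residue theorem for sections of $\OO_X(K_X+L)$ (which is the dual reformulation of what the paper's \S 1 establishes for Theorem 1.1, namely that the composition there vanishes and that $\delta$ is the sum-of-residues functional via $\det(ds)_p$), and you bootstrap the general-$A$ statement from it by multiplying sections of $A$ against sections of $I_{Z_2}(K_X+L-A)$. This trade has real consequences. What your approach buys: it is more elementary and closer to the original Griffiths--Harris residue-theoretic point of view; once the residue theorem is granted, everything else is finite-dimensional linear algebra, and you avoid re-running the cohomological argument and the appendix lemma with a twist by $A$. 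What the paper's approach buys: its formulation is purely sheaf-theoretic and so extends to non-reduced $Z$ (Remark 1.4), where your construction breaks down --- the pointwise trivializations $\res_p$ and the block-diagonal perfect pairing over $Z_1$ use reducedness essentially, and in the general Gorenstein setting one must replace $\OO_{Z_1} \subseteq \OO_Z$ by $\omega_{Z_1}\otimes\omega_Z^* \hookrightarrow \OO_Z$ and work with the trace pairing abstractly. Your closing remark is also on target: both arguments naturally yield only the inequality $v_2 \le v_1$, since neither shows that $\beta$ surjects onto the full annihilator of $\Image\alpha$.
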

\noi So for example, if $A = \OO_X$ and $Z_1 $ consists of  single point $x \in Z$, then $v_1 = 0$ and this reduces to the classical statement. Similarly, if we choose $Z_1$ in such a way that it imposes independent conditions on $H^0(A)$ then the assertion is that any section of $\OO_X(K_X + L - A)$ vanishing on $Z_2 = Z - Z_1$ also vanishes on $Z_1$. The theorem of Tan--Viehweg generalizes  analogous statements for hypersurfaces in projective space (\cite{Bach}, \cite{DGO}, \cite{EGH}).\footnote{In the classical case, vanishings for the cohomology of line bundles on projective space yield the stronger assertion that $v_2 = v_1$.}

We prove that in the case of possibly excessive vanishing, the analogous statement remains true taking  into account multiplier ideal corrections. 
\begin{theoremalpha} \label{CB.Excess.TV.Intro}
Suppose as above that $s$ defines the ideal $\frb \cdot I_Z \subseteq \OO_X$. Then the inequality \eqref{Viehweg.Tan.Eqn.1} continues to hold  provided that one takes
\[  
v_2 \ = \ \dim \coker \Big( H^0\big( I_Z (K_X + L - A)\otimes\MI{\frb^n} \big) \hookrightarrow H^0\big(I_{Z_2}(K_X + L - A) \otimes\MI{\frb^n}\big) \Big)
\]
\end{theoremalpha}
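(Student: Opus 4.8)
The plan is to run the classical proof of the Tan--Viehweg inequality with the Skoda complex (Skod) in place of the Koszul complex (Kos). Write $M = K_X + L - A$. Since $\frb + I_Z = \OO_X$, the ideal $\frb$ --- and hence every multiplier ideal $\MI{\frb^m}$ --- equals $\OO_X$ in a neighborhood of $Z$; thus $\MI{\frb^n}\otimes\OO_Z = \OO_Z$, and every section of a line bundle twisted by $\MI{\frb^n}$ acquires honest values along $Z$, so all the restriction maps below are defined. Because $s$ vanishes simply along the finite set $Z$, near $Z$ it cuts out a local complete intersection; hence $Z$ is Gorenstein with $\omega_Z = \OO_Z(K_X+L)$, the point--residues of $s$ assemble into a trace $\textnormal{tr}_Z\colon \GGG{Z}{\omega_Z}\to\CC$, and $\langle a, b\rangle_Z := \textnormal{tr}_Z(ab)$ defines a perfect pairing on $\GGG{Z}{\OO_Z(A)}\times\GGG{Z}{\OO_Z(M)}$ that splits as the direct sum of the analogous perfect pairings over $Z_1$ and $Z_2$.

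Set $Q = \mathrm{image}\big(\GGG{X}{A}\to\GGG{Z_1}{\OO_{Z_1}(A)}\big)$, so that $v_1 = \dim\GGG{Z_1}{\OO_{Z_1}(A)} - \dim Q$, and let $P\subseteq\GGG{Z_1}{\OO_{Z_1}(M)}$ be the image of $\GGG{X}{I_{Z_2}(M)\otimes\MI{\frb^n}}$ under restriction to $Z_1$. Since a section vanishing on $Z_2$ lies in $\GGG{X}{I_Z(M)\otimes\MI{\frb^n}}$ exactly when it also vanishes on $Z_1$, restriction to $Z_1$ identifies the cokernel defining $v_2$ with $P$, so $v_2 = \dim P$. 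The inequality $v_2\le v_1$ will follow once $P$ and $Q$ are shown to be orthogonal under $\langle\,,\,\rangle_{Z_1}$, for then $\dim P\le\dim Q^{\perp} = \dim\GGG{Z_1}{\OO_{Z_1}(M)} - \dim Q = v_1$. Orthogonality is where the excess enters: for $g\in\GGG{X}{A}$ and $h\in\GGG{X}{I_{Z_2}(M)\otimes\MI{\frb^n}}$ the section $h$ kills $Z_2$, so $\langle g|_{Z_1},h|_{Z_1}\rangle_{Z_1} = \langle g|_Z, h|_Z\rangle_Z = \textnormal{tr}_Z\big((gh)|_Z\big)$, while $gh\in\GGG{X}{\OO_X(K_X+L)\otimes\MI{\frb^n}}$. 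Everything therefore reduces to the global residue theorem that the composite $\GGG{X}{\OO_X(K_X+L)\otimes\MI{\frb^n}}\to\GGG{Z}{\omega_Z}\xrightarrow{\textnormal{tr}_Z}\CC$ vanishes.

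This residue theorem is the duality underlying Theorem \ref{ExcessCB.Intro.B}, and I would extract it from (Skod) just as the classical one is extracted from (Kos). Tensoring (Skod) by $\OO_X(K_X+L)$ gives an exact resolution of $\MI{\frb^n}\cdot I_Z(K_X+L)$ whose leftmost term is $\Lambda^n E^*\otimes\OO_X(K_X+L) = \omega_X$; splitting it into short exact sequences and composing the resulting connecting maps produces a canonical functional
\[ \Psi\colon \HH{1}{X}{\MI{\frb^n}\cdot I_Z(K_X+L)}\lra \HH{n}{X}{\omega_X}\cong\CC. \]
On the other hand the sequence $0\to\MI{\frb^n}\cdot I_Z(K_X+L)\to\OO_X(K_X+L)\otimes\MI{\frb^n}\to\omega_Z\to 0$ gives a connecting map $\delta\colon\GGG{Z}{\omega_Z}\to\HH{1}{X}{\MI{\frb^n}\cdot I_Z(K_X+L)}$ whose kernel is exactly the image of the restriction map appearing in the residue theorem. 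Hence the residue theorem is equivalent to the factorization $\textnormal{tr}_Z = \Psi\circ\delta$.

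The factorization $\textnormal{tr}_Z = \Psi\circ\delta$ is the step I expect to be the main obstacle: it is the compatibility of the Grothendieck trace on the Gorenstein scheme $Z$ with the Serre--duality trace on $X$, mediated by the Skoda resolution. The point that should make it work --- and that explains why the result ``follows almost immediately from the classical case'' --- is that for any neighborhood $U\supseteq Z$ with $U\cap\Zero{\frb}=\varnothing$ one has $\MI{\frb^m}|_U = \OO_U$, so (Skod) restricts over $U$ to the ordinary Koszul complex (Kos); since $\delta(\tau)$ is supported near $Z$ in the local--cohomology sense, the computation of $\Psi\circ\delta$ localizes to $U$ and there reduces verbatim to the classical Griffiths--Harris residue computation. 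Making the local/global duality comparison precise, and checking that the intermediate terms $\Lambda^{n-j}E^*\otimes\MI{\frb^j}(K_X+L)$ (for $1\le j\le n-1$) do not obstruct the definition of $\Psi$ --- for which one appeals to Nadel--type vanishing --- are the delicate points; granting them, the residue theorem, and with it the inequality $v_2\le v_1$, follows.
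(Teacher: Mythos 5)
Your reduction steps are sound, and they faithfully repackage the classical Tan--Viehweg argument: the identification $v_2 = \dim P$, the perfect residue pairing on the reduced finite scheme $Z_1$, and the observation that $v_2 \le v_1$ follows once $P \perp Q$, which in turn reduces to the vanishing of the composite $\GGG{X}{\OO_X(K_X+L)\otimes\MI{\frb^n}} \to \GGG{Z}{\omega_Z} \xrightarrow{\textnormal{tr}_Z} \CC$. But the proof has a genuine gap exactly where you say it does: the factorization $\textnormal{tr}_Z = \Psi\circ\delta$ is the entire content of the theorem --- it is the analogue of Lemma \ref{A.Dual.Lemma} for the Skoda resolution --- and nothing in your proposal establishes it. Note that the paper's Appendix \ref{Appendix.A} argument does not transfer: it requires a \emph{locally free} resolution $L_\bullet$ of the sheaf in question, so that the termwise dual complex again represents the Grothendieck--Verdier dual, and it exploits the self-duality of the Koszul complex. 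The terms $\Lambda^{n-j}E^*\otimes\MI{\frb^j}$ of the Skoda complex are twists by ideal sheaves, not locally free, so naive dualization destroys exactness and no self-duality is available. Your proposed localization (``near $Z$ the Skoda complex is the Koszul complex'') only identifies the connecting map $\delta$ locally; $\Psi$ is a composition of \emph{global} connecting homomorphisms ending in $\HH{n}{X}{\omega_X}$, and comparing a globally defined class with a locally defined residue is precisely the local-to-global duality bookkeeping you defer. One smaller point: Nadel-type vanishing is not needed to \emph{define} $\Psi$ (a composition of connecting maps always exists); what no soft argument gives you is that $\Psi\circ\delta$ equals the trace --- in particular that it is nonzero --- and that is the crux.

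The paper closes this gap by never opening it. Take a log resolution $\mu : X^\pr \to X$ of $\frb$, chosen to be an isomorphism over a neighborhood of $Z$, with $\frb\cdot\OO_{X^\pr} = \OO_{X^\pr}(-B)$. Then $s$ induces a section $s^\pr$ of $E^\pr = \mu^*E\otimes\OO_{X^\pr}(-B)$ vanishing \emph{exactly} on $Z^\pr \cong Z$, and one applies the already-proved Theorem \ref{TV.Type.Statement} on $X^\pr$ with $A^\pr = \mu^*A$: by the definition of multiplier ideals and the projection formula, $\mu_*\OO_{X^\pr}(K_{X^\pr}+\det E^\pr - A^\pr) = \OO_X(K_X+L-A)\otimes\MI{\frb^n}$, which identifies the spaces of sections computing $v_1$ and $v_2$ on $X$ with the corresponding spaces on $X^\pr$. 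All duality then takes place on $X^\pr$, where the zero locus is finite and the classical Lemma \ref{A.Dual.Lemma} applies verbatim. It is telling that the paper proves exactness of the Skoda complex itself (Example \ref{Skoda.Complex}) by this same pullback plus local vanishing \eqref{Local.Vanishing}: if you want to salvage your direct approach, the cleanest way to prove $\textnormal{tr}_Z = \Psi\circ\delta$ is to pull the entire comparison back to $X^\pr$ --- at which point you will have reproduced the paper's proof with extra steps.
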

\noi Again this follows quite directly from the classical statement. 

Concerning the organization of this note, we start in \S1 with a review of the theorem of Griffiths--Harris, and the  extension in the spirit of Tan and Viehweg. 
As an application of the latter, we give at the end of the section a somewhat simplified and strengthened account of some results of Sun \cite{Sun} concerning finite determinantal loci: see Theorem \ref{CB.for.Det.Loci}.\footnote{In an earlier version of this paper, we overlooked the work of Sun. We apologize for this ommission.}    In \S2 we derive the results involving multiplier ideals by applying the classical theorems on a log resolution of the base ideal.  Since our primary interests lie on excess vanishing, we make the simplifying assumption throughout the main exposition that the finite zero-locus $Z$ is reduced. The well-understood modifications needed in the general case are discussed in Remarks \ref{Non.Reduced.Zeroes}, \ref{Caveat} and \ref{Non.Red.Excess}.    We work throughout over the complex numbers.

\section{A Review of Cayley-Bacharach with Proper Vanishing}

\numberwithin{equation}{section}
\setcounter{equation}{0}

In this section we review the classical theorem of Cayley-Bacharach from the viewpoint Griffiths--Harris and its extension in the spirit Tan and Viehweg. As an application, we give at the end of the section some results of Cayley-Bacharach type for degeneracy loci.

Suppose then that $X$ is a smooth complex projective variety of dimension $n$, and let $E$ be a vector bundle of rank $n$ on $X$, with $\det E = L$. We assume given a section $s \in \GGG{X}{E}$ vanishing simply on a non-empty finite set $Z \subseteq X$. Thus
\[   \# Z \ = \ \int_X c_n(E). \]
In this setting, the basic result is due to Griffiths and Harris \cite{GH}:
\begin{theorem} \label{GH.CB.Thm}
Consider a section  
\[ h\,  \in \, \GGG{X}{\OO_X(K_X + L)}\] vanishing  at all but one of the points of $Z$. Then $h$ vanishes on the remaining one as well. 
\end{theorem}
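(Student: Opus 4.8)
The plan is to recover the classical residue-theoretic proof of Griffiths--Harris, whose mechanism is precisely the exactness of the Koszul complex (Kos) together with Serre duality. Write $M = K_X + L$. Since $s$ is a regular section, $Z$ is a local complete intersection with $N_{Z/X} = E\,|\,Z$, so adjunction identifies the twisted structure sheaf $\OO_Z \otimes \OO_X(M) = \OO_Z \otimes \omega_X \otimes \det E$ with the dualizing sheaf $\omega_Z$. Restriction then gives a map $\rho \colon H^0\big(X, \OO_X(M)\big) \lra H^0(Z, \omega_Z)$ carrying a section $h$ to the tuple of its values at the points of $Z$; because $Z$ is reduced, $H^0(Z,\omega_Z) = \bigoplus_{z \in Z} \CC$, and ``$h$ vanishes at $z$'' means exactly that the $z$-component of $\rho(h)$ is zero. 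The whole statement will follow once I produce a trace functional $\res \colon H^0(Z,\omega_Z) \lra \CC$ equal to the sum of the coordinates and show that $\res \circ \rho = 0$: for then a section vanishing at every point but $z_0$ is sent by $\rho$ to a tuple supported at $z_0$, whose single remaining coordinate is thus forced to vanish.

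First I would set up the two dualities. On the zero-dimensional scheme $Z$, the residue (trace) map $\res$ is, by local duality at each reduced point, the sum of the canonical isomorphisms $\omega_{Z,z} \xrightarrow{\sim} \CC$; dually, $\res \in H^0(Z,\omega_Z) = H^0(Z,\OO_Z)^\vee$ corresponds to the constant function $1 \in H^0(Z,\OO_Z)$. On $X$, Serre duality gives $H^0\big(X, \omega_X \otimes L\big)^\vee \cong H^n(X, L^{-1}) = H^n(X, \Lambda^n E^*)$, the last equality because $\Lambda^n E^* = (\det E)^{-1} = L^{-1}$. Under these identifications $\res \circ \rho$, viewed as an element of $H^0\big(X,\OO_X(M)\big)^\vee$, becomes a class in $H^n(X,\Lambda^n E^*)$, and the content of the argument is to recognise this class as zero.

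The key step is to identify the Serre transpose of $\rho$ with the edge map of the Koszul resolution. Grothendieck--Serre duality, applied to (Kos) viewed as a resolution of $\OO_Z$ (after prepending $\OO_X$), shows that the dual of the restriction $\rho$ is the iterated connecting homomorphism $\alpha \colon H^0(Z,\OO_Z) \lra H^n(X, \Lambda^n E^*)$ obtained by chopping (Kos) into short exact sequences and composing the resulting coboundary maps. Consequently $\res \circ \rho$ corresponds to $\alpha(1)$. But the very first coboundary in this composite is the connecting map $H^0(Z,\OO_Z) \lra H^1(X, I_Z)$ attached to $0 \lra I_Z \lra \OO_X \lra \OO_Z \lra 0$, and $1 \in H^0(Z,\OO_Z)$ is the restriction of $1 \in H^0(X,\OO_X)$, hence lies in the kernel of that coboundary. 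Therefore $\alpha(1) = 0$, i.e. $\res \circ \rho = 0$, which is the global residue theorem and finishes the proof.

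I expect the main obstacle to be the compatibility asserted in the key step: that Serre duality on $X$ and the residue pairing on $Z$ are matched by the Koszul resolution, so that the transpose of the concrete restriction map $\rho$ is exactly the Koszul edge map $\alpha$. Everything else is formal, but this identification is where the dualizing sheaf of a complete intersection must be computed functorially through the Koszul complex and reconciled with the trace on $Z$; in the original treatment of Griffiths--Harris this is handled analytically via an explicit residue current, while cohomologically it amounts to the naturality statements for Grothendieck duality. Once that compatibility is in hand, the vanishing of the first coboundary on the class of $1$ delivers the theorem at once.
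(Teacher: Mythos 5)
Your proposal is correct and is essentially the paper's own argument: both rest on the exactness of the Koszul complex, the adjunction identification $\OO_Z(K_X+L)\cong \omega_Z$, and the key compatibility---proved in the paper's Appendix~\ref{Appendix.A} via functoriality of Grothendieck--Verdier duality---that the iterated Koszul coboundary is Serre-dual to the restriction map, which is exactly the ``key step'' you isolate as the main obstacle. The only difference is presentational: the paper twists the Koszul complex by $K_X+L$ and identifies the connecting map $\delta$ on $H^0(\omega_Z)$ as the transpose of $H^0(\OO_X)\to H^0(\OO_Z)$, whereas you work with the untwisted complex and transpose $\rho$ instead, so your key identification is the Appendix's Proposition taken at $A=\OO_X(K_X+L)$ (together with Koszul self-duality) rather than at $A=\OO_X$; the two formulations are duals of one another and carry identical content.
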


We outline the argument of Griffiths and Harris from \cite{GH}. The starting point of the proof is to form the Koszul complex determined by $s$:
\[
0 \lra \Lambda^n E^* \lra \Lambda^{n-1}E^* \lra \ldots \ \lra E^* \lra \OO_X \lra \OO_Z \lra 0.
\]
Because $X$ is smooth and $s$ vanishes in the expected codimension $n$, this is exact. Now twist through by $\OO_X(K_X + L)$. Recalling that $L = \Lambda^nE$ this gives a long exact sequence:
\begin{multline} \label{Twisted.Koszul.Cx.1}
0\lra  \OO_X(K_X) \lra \Lambda^{n-1}E^*\otimes \OO_X(K_+L) \lra\ldots\\
\ldots \lra E^* \otimes \OO_X(K_X + L ) \lra \OO_X(K_X + L) \lra \OO_Z(K_K + L) \lra 0. \end{multline}
Splitting this into short exact sequences and taking  cohomology, one arrives at maps
\begin{equation} \label{CB.Proof.Equation}
\HH{0}{X}{\OO_X(K_X + L) } \lra \HH{0}{Z}{\OO_Z(K_X + L)} \overset{\delta}{\lra} \HH{n}{X}{\OO_X(K_X)}  \end{equation}
whose composition is zero. (Absent additional vanishings,  \eqref{CB.Proof.Equation} might not be exact.)

On the other hand, note that $L \otimes \OO_Z = \det(N_{Z/X})$ and hence there is a natural identification 
\[  \OO_Z(K_X + L) \ = \ \omega_Z. \]
Therefore duality canonically identifies $\delta$ with a homomorphism
\[   \delta^\pr : \HH{0}{Z}{\OO_Z}^*\lra \HH{0}{X}{\OO_X}^*. \tag{*} \]
Not surprisingly, one has the:
\begin{lemmann} The mapping $\delta^\pr$ in $(*)$ is the dual of the canonical restriction
\[  \HH{0}{X}{\OO_X} \lra \HH{0}{Z}{\OO_Z}.\]
\end{lemmann}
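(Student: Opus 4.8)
The plan is to unwind the two duality identifications into explicit trace pairings and then recognize the desired compatibility as an instance of the residue theorem. Write $\rho \colon \HH{0}{X}{\OO_X} \lra \HH{0}{Z}{\OO_Z}$ for the canonical restriction. Serre duality identifies $\HH{n}{X}{\OO_X(K_X)}$ with $\HH{0}{X}{\OO_X}^*$ via the trace (integration) pairing, and the Gorenstein duality of the finite scheme $Z$, together with the identification $\OO_Z(K_X+L) = \omega_Z$, identifies $\HH{0}{Z}{\omega_Z}$ with $\HH{0}{Z}{\OO_Z}^*$ via the trace pairing, which is a sum of local residues over the points of $Z$. Under these identifications, the assertion $\delta^\pr = \rho^*$ is equivalent to the adjunction identity
\[
\big\langle \delta(\phi), \alpha \big\rangle_X \ = \ \big\langle \phi , \rho(\alpha) \big\rangle_Z
\qquad \text{for all } \phi \in \HH{0}{Z}{\omega_Z}, \ \alpha \in \HH{0}{X}{\OO_X},
\]
where $\langle -,-\rangle_X$ and $\langle -,-\rangle_Z$ denote the respective trace pairings. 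So the entire content of the lemma is the verification of this single identity.

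The cleanest way to see the identity is to observe that it is built into the self-duality of the Koszul complex. Applying $R\mathcal{H}om(-,\OO_X)$ to the resolution of $\OO_Z$ and using $(\Lambda^i E^*)^\vee \cong \Lambda^{n-i}E^*\otimes L$, one finds that the dual of the Koszul complex is again the (twisted, reindexed) Koszul complex, and that Grothendieck--Serre duality carries the short exact sequences obtained by splicing \eqref{Twisted.Koszul.Cx.1} to one another. Tracking the connecting map through this duality shows that $\delta$ becomes precisely the map on cohomology induced by the surjection $\OO_X \twoheadrightarrow \OO_Z$, that is, by restriction. This is the conceptual reason the lemma holds, and it makes the adjunction identity a formal consequence of the functoriality of the duality pairing.

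For a hands-on verification one reduces by linearity to the case $\alpha = 1$, so that $\rho(\alpha)$ is the constant section in $\HH{0}{Z}{\OO_Z}$. The identity then reads $\int_X \delta(\phi) = \sum_{p \in Z} \res_p(\phi)$, the equality of the global Serre trace of $\delta(\phi)$ with the sum of the local residues of $\phi$ at the points of $Z$; this is the classical global residue theorem, and indeed it is in exactly these terms that Griffiths and Harris construct $\delta$ in \cite{GH}. The only genuine work, and the step where care is needed, is the bookkeeping of conventions: one must check that the connecting homomorphism coming out of \eqref{Twisted.Koszul.Cx.1} agrees on the nose, signs included, with the local residue symbol under the chosen isomorphism $\OO_Z(K_X+L)\cong\omega_Z$. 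Once the normalizations are pinned down the identity is immediate, but this compatibility of $\delta$ with the residue is the heart of the matter.
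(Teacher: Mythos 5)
Your second paragraph is precisely the paper's own argument: Appendix \ref{Appendix.A} proves the lemma by representing the restriction as a map of complexes $u : A[0] \lra L_\bullet \otimes A$, invoking the functoriality of Grothendieck--Verdier duality to dualize it to the evident map $M_\bullet \lra M_{-n}[n]$, and then combining with the self-duality of the Koszul complex. The one step you pass over quickly --- ``tracking the connecting map through this duality'' --- is exactly what the paper's closing lemma pins down (via naive truncations, showing that the composite of connecting homomorphisms from splicing the resolution agrees with the induced map on hypercohomology $\bHH{0}{X}{M_\bullet} \lra \bHH{0}{X}{M_{-n}[n]}$); your residue-theoretic verification in the last paragraph is a legitimate alternative in the spirit of Griffiths--Harris's original construction, but it is not the route the paper takes.
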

\noi As Griffiths and Harris observe, this is ultimately a consequence of the functoriality of duality. We give the proof of a more general result -- Lemma \ref{A.Dual.Lemma} below -- in Appendix \ref{Appendix.A}.

 Granting the Lemma, Theorem \ref{GH.CB.Thm} follows at once. In fact, in terms of the natural basis for $\HH{0}{Z}{\OO_Z}$ and its dual, the Lemma shows that $\delta^\pr$ is given by the matrix $(1,\ldots, 1)$. Therefore if $h \in \HH{0}{X}{\OO_X(K_X + L)}$ were to vanish at all but one of the points of $Z$ but not at the remaining one, then $h | Z \not \in \ker (\delta)$, contradicting the fact that the composition in \eqref{CB.Proof.Equation} is the zero mapping.

We turn now to a result in the spirit of  Tan and Viehweg \cite{TV}.\footnote{The actual statement and proof in  \cite{TV} are rather more complicated, but Theorem \ref{TV.Type.Statement} is essentially what is established there.  In \cite{Tan}, Tan relates these statements to the Fujita conjecture.}
 Keeping  assumptions and notation as above, write $Z = Z_1 \sqcup Z_2$ as the union of two non-empty subsets, and fix an arbitrary line bundle $A$.  Recall the statement:
\begin{theorem} \label{TV.Type.Statement}
Define
\[ \begin{gathered}
V_1 \ = \ \coker \Big( H^0(A) \lra H^0\big(A \otimes \OO_{Z_1}\big) \Big),  \\
V_2 \ = \ \coker \Big( H^0\big( I_Z (K_X + L - A)\big) \hookrightarrow H^0\big(I_{Z_2}(K_X + L - A) \big) \Big)
\end{gathered}
\]
Then
$\dim V_2 \, \le \, \dim  V_1.$  \end{theorem}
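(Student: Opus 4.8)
The plan is to run the Griffiths--Harris argument of Theorem~\ref{GH.CB.Thm} verbatim, except that I twist the Koszul complex of $s$ by $\OO_X(K_X + L - A)$ instead of by $\OO_X(K_X+L)$, and then repackage the resulting duality statement so that it compares $V_1$ and $V_2$ directly. First I would twist the (exact) Koszul complex by $\OO_X(K_X+L-A)$. Since $\Lambda^n E^* \otimes L \cong \OO_X$, the leftmost term becomes $\OO_X(K_X - A)$, and splitting into short exact sequences and taking cohomology produces a connecting map
\[ \delta : \HH{0}{Z}{\OO_Z(K_X+L-A)} \lra \HH{n}{X}{\OO_X(K_X - A)} \]
through which the restriction of $\HH{0}{X}{\OO_X(K_X+L-A)}$ factors as the zero map. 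I emphasize that only this composition-is-zero property will be used, not exactness of the three-term sequence.

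Next I would make the two duality identifications. Serre duality on $X$ gives $\HH{n}{X}{\OO_X(K_X-A)} = \HH{0}{X}{A}^*$, while, $Z$ being finite and reduced, $\OO_Z(K_X+L) \cong \omega_Z$ and duality on $Z$ identifies $\HH{0}{Z}{\OO_Z(K_X+L-A)}$ with $\HH{0}{Z}{A|_Z}^*$. Under these identifications the general form of the Griffiths--Harris lemma (Lemma~\ref{A.Dual.Lemma}) identifies $\delta$ with the dual $\rho^\pr$ of the honest restriction
\[ \rho : \HH{0}{X}{A} \lra \HH{0}{Z}{A|_Z}. \]
Because $Z = Z_1 \sqcup Z_2$ is a \emph{disjoint} union, $\HH{0}{Z}{A|_Z} = \HH{0}{Z_1}{A|_{Z_1}} \oplus \HH{0}{Z_2}{A|_{Z_2}}$, so $\rho = (\rho_1,\rho_2)$; dually $\HH{0}{Z}{\OO_Z(K_X+L-A)} = W_1 \oplus W_2$ with $W_i = \HH{0}{Z_i}{A|_{Z_i}}^*$, and $\delta$ restricted to the summand $W_1$ is exactly $\rho_1^\pr$.

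With this in place I would reinterpret both quantities. Dualizing $V_1 = \coker(\rho_1)$ gives $\dim V_1 = \dim \ker\!\big(\rho_1^\pr : W_1 \to \HH{0}{X}{A}^*\big)$. For $V_2$, the disjointness of $Z_1,Z_2$ yields a sheaf sequence $0 \to I_Z \to I_{Z_2} \to \OO_{Z_1} \to 0$; twisting by $K_X+L-A$ and taking cohomology realizes $V_2$ as the image $R_1 \subseteq W_1$ of $H^0(I_{Z_2}(K_X+L-A))$ under restriction to $Z_1$, so $\dim V_2 = \dim R_1$. Finally I invoke composition-is-zero: a section $t$ vanishing on $Z_2$ restricts on $Z$ to $(t|_{Z_1},0) \in W_1 \oplus W_2$, and, coming from $H^0(X,\OO_X(K_X+L-A))$, its class lies in $\ker\delta$; hence $\rho_1^\pr(t|_{Z_1}) = \delta(t|_{Z_1},0) = 0$. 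Thus $R_1 \subseteq \ker(\rho_1^\pr)$, giving $\dim V_2 = \dim R_1 \le \dim \ker(\rho_1^\pr) = \dim V_1$, as claimed.

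The step I expect to be the main obstacle is the careful bookkeeping of the two duality identifications — Serre duality on $X$ together with Grothendieck duality on the finite scheme $Z$ — and in particular the verification that $\delta$ really is $\rho^\pr$ under these identifications. This is precisely where the functoriality-of-duality input of Appendix~\ref{Appendix.A} carries the weight; the rest is linear algebra over the disjoint decomposition $Z = Z_1 \sqcup Z_2$.
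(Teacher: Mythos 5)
Your proposal is correct and follows essentially the same route as the paper's proof: twist the Koszul complex by $\OO_X(K_X+L-A)$, use Lemma~\ref{A.Dual.Lemma} to identify the connecting map with the dual of the restriction $\HH{0}{X}{A} \to \HH{0}{Z}{A\otimes \OO_Z}$, identify $V_2$ with the image of restriction to $Z_1$, and conclude from the composition-is-zero property that this image lies in $\ker(\rho_1^\pr)$, whose dimension is $\dim V_1$. The only cosmetic difference is that you derive the identification of $\delta_1$ with $\rho_1^\pr$ by restricting the full identification $\delta = \rho^\pr$ to the summand $W_1$, whereas the paper states both identifications as part of Lemma~\ref{A.Dual.Lemma}; the content is the same.
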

\noi As noted in the Introduction, this implies Theorem \ref{GH.CB.Thm} (at least when $\# Z \ge 2$). 

For the proof, one starts by tensoring \eqref{Twisted.Koszul.Cx.1} by $\OO_X(-A)$. Taking cohomology as before, one arrives at a complex
\begin{equation} \label{Complex.with.A.1}
H^0\big(\OO_X(K_X + L-A) \big) \overset{\res} \lra H^0 \big( \OO_Z(K_X + L-A)\big) \overset{\delta}{\lra} H^n 
\big(\OO_X(K_X-A)\big).
\end{equation}
Moreover, via the decomposition
\[H^0 \big(\OO_Z(K_X + L-A)\big) \ = \ H^0 \big(\OO_{Z_1}(K_X + L-A)\big) \, \oplus \, H^0 \big(\OO_{Z_2}(K_X + L-A)\big),
\]
this restricts to a subcomplex
\begin{equation} \label{Complex.with.A.2}
H^0 \big( \OO_X(K_X + L-A)\otimes I_{Z_2} \big) \overset{\res_1} \lra H^0 \big( \OO_{Z_1}(K_X + L-A)\big)\overset{\delta_1}{\lra} H^n \big(\OO_X(K_X-A)\big).
\end{equation}
Note next that 
\[  \ker (\res_1) \ = \ H^0 \big((K_X + L-A)\otimes I_{Z}\big),
\] 
and hence
$
V_2= \Image ( \res_1) . 
$
On the other hand, since 
\eqref{Complex.with.A.2} is a complex, one has
\[  
\dim \Image(\res_1) \ \le \ \dim \ker(\delta_1).  \] 
It is therefore sufficient to show that 
\[
\dim \ker(\delta_1) \ = \ \dim
 \coker \Big( H^0(A) \lra H^0\big(A \otimes \OO_{Z_1}\big) \Big).\tag{*}\]

For this we again apply duality, which 
  identifies $\delta$ and $\delta_1$ with a diagram of maps
  \[
  \xymatrix@R=7pt{
  \HH{0}{Z}{A \otimes \OO_Z}^* \ar[dr]^{\delta^\pr}   \\  &\HH{0}{X}{A}^* .\\
  \HH{0}{Z_1}{A \otimes \OO_{Z_1}}^* \ar@{^{(}->}[uu]\ar[ur]_{\delta_1^\pr}    
  }
  \]
As above the crucial point is to verify:
\begin{lemma} \label{A.Dual.Lemma}
The mappings $\delta^\pr$ and $\delta_1^\pr$ are dual to the natural restriction morphisms
\[   \HH{0}{X}{A} \lra \HH{0}{Z}{A \otimes \OO_Z } \ \ , \ \  \HH{0}{X}{A} \lra \HH{0}{Z_1}{A \otimes \OO_{Z_1} }. \]\end{lemma}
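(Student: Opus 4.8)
The plan is to reduce the identification of $\delta^\pr$ and $\delta_1^\pr$ to a single compatibility of trace maps, by unwinding what ``dual to restriction'' means as a pairing. Serre duality on $X$ identifies $\HH{n}{X}{\OO_X(K_X-A)}$ with $\HH{0}{X}{A}^*$ via cup product followed by the trace $\tn{tr}_X \colon \HH{n}{X}{\OO_X(K_X)} \lra \CC$, while the identification $\OO_Z(K_X+L) = \omega_Z$ together with duality for the finite scheme $Z$ identifies $\HH{0}{Z}{\OO_Z(K_X+L-A)}$ with $\HH{0}{Z}{A\otimes\OO_Z}^*$ via multiplication followed by the trace $\tn{tr}_Z \colon \HH{0}{Z}{\omega_Z}\lra\CC$. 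Under these identifications, the assertion that $\delta^\pr$ is dual to the restriction $r\colon \HH{0}{X}{A}\lra\HH{0}{Z}{A\otimes\OO_Z}$ is equivalent to the pairing identity
\[
\tn{tr}_X\big(\alpha \cup \delta(\tau)\big) \ = \ \tn{tr}_Z\big(\tau \cdot (\alpha|_Z)\big)
\]
for all $\alpha \in \HH{0}{X}{A}$ and $\tau \in \HH{0}{Z}{\OO_Z(K_X+L-A)}$, which is what I would aim to verify.

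First I would remove the twist by $A$. Multiplication by a global section $\alpha \in \HH{0}{X}{A}$ defines a morphism from the $\otimes\,\OO_X(-A)$ twist of the Koszul resolution \eqref{Twisted.Koszul.Cx.1}, which computes $\delta$ in \eqref{Complex.with.A.1}, to \eqref{Twisted.Koszul.Cx.1} itself, which computes the untwisted connecting map $\delta_0\colon \HH{0}{Z}{\OO_Z(K_X+L)}\lra\HH{n}{X}{\OO_X(K_X)}$; indeed the Koszul differentials are $\OO_X$-linear, so multiplication by $\alpha$ commutes with them and gives a map of the corresponding short exact sequences. Since connecting homomorphisms are natural, this yields $\alpha\cup\delta(\tau) = \delta_0\big(\tau\cdot(\alpha|_Z)\big)$ in $\HH{n}{X}{\OO_X(K_X)}$. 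The desired pairing identity therefore collapses to the single assertion, independent of $A$, that
\[
\tn{tr}_X \circ \delta_0 \ = \ \tn{tr}_Z \colon \HH{0}{Z}{\omega_Z}\lra\CC.
\]

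The content now lies in this last equality, which I expect to be the main obstacle: it is precisely the statement that the Grothendieck trace on $X$, transported through the Koszul resolution of $\OO_Z$, recovers the trace on the finite scheme $Z$ --- a global residue theorem. I would prove it by localizing. The trace $\tn{tr}_Z$ is by definition the sum over $z \in Z$ of local residues, and, because $Z$ is cut out by the regular section $s$, the connecting map $\delta_0$ is compatible with the decomposition $\HH{0}{Z}{\omega_Z} = \bigoplus_{z\in Z} \omega_{Z,z}$ and at each point is computed by the Koszul--\v{C}ech representative of the corresponding local cohomology class. The required identity then reduces to the local duality theorem for a complete intersection, i.e.\ to the compatibility of the Grothendieck residue symbol with $\tn{tr}_X$; this is the functoriality of duality invoked by Griffiths and Harris, and is exactly what Appendix \ref{Appendix.A} would establish in the stated generality.

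Finally, the statement for $\delta_1^\pr$ follows formally from that for $\delta^\pr$. Under the decomposition $\HH{0}{Z}{A\otimes\OO_Z} = \HH{0}{Z_1}{A\otimes\OO_{Z_1}}\oplus\HH{0}{Z_2}{A\otimes\OO_{Z_2}}$, the map $\delta_1$ is the restriction of $\delta$ to the $Z_1$-summand, so $\delta_1^\pr$ is $\delta^\pr$ precomposed with the inclusion of $\HH{0}{Z_1}{A\otimes\OO_{Z_1}}^*$; dually, restriction to $Z_1$ factors as restriction to $Z$ followed by projection onto the $Z_1$-factor. Hence once $\delta^\pr$ is identified with the transpose of $r$, the same identification holds for $\delta_1^\pr$ and restriction to $Z_1$.
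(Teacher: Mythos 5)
Your two reductions are correct and genuinely different from what the paper does. Serre duality on $X$ and duality on the finite scheme $Z$ do convert the lemma into the pairing identity you state; and since the Koszul differentials are $\OO_X$-linear, multiplication by $\alpha \in \HH{0}{X}{A}$ gives a map from the $(-A)$-twisted complex to the untwisted one, compatible with all the split short exact sequences, so naturality of connecting homomorphisms yields $\alpha \cup \delta(\tau) = \delta_0\big(\tau \cdot (\alpha|_Z)\big)$. This cleanly reduces the statement, for every $A$ and (by your correct formal argument at the end, which is the part the paper leaves to the reader) every splitting $Z = Z_1 \sqcup Z_2$, to the single universal identity $\tn{tr}_X \circ \delta_0 = \tn{tr}_Z$ on $\HH{0}{Z}{\omega_Z}$. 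The paper never makes this reduction: it keeps $A$ throughout, works with an arbitrary locally free resolution $L_\bullet$ of $\OO_Z$ (not just the Koszul complex), applies functoriality of Grothendieck--Verdier duality to the map $u : A[0] \lra L_\bullet \otimes A$ representing restriction, and then proves an elementary truncation lemma identifying the connecting homomorphism obtained by splitting $M_\bullet = \mathbf{D}_X(L_\bullet \otimes A)$ into short exact sequences with the induced map $\mathbf{H}^0(M_\bullet) \lra \mathbf{H}^0(M_{-n}[n])$ on hypercohomology.

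The caveat is your final step, which is where the entire content now sits. The identity $\tn{tr}_X \circ \delta_0 = \tn{tr}_Z$ is not verified by your localization; after decomposing into local contributions you invoke the compatibility of the Grothendieck residue symbol with $\tn{tr}_X$, i.e.\ the global residue theorem. That is a legitimate citation (Hartshorne's \emph{Residues and Duality}, or Griffiths--Harris's analytic proof), but it is exactly the ``functoriality of duality'' the lemma encodes --- as your own phrase ``exactly what Appendix \ref{Appendix.A} would establish'' concedes --- so your argument is complete only if that theorem is accepted as a black box, whereas the paper's appendix proves the needed statement self-containedly from the derived-category formalism. Note also that your localization uses that $Z$ is a reduced complete intersection cut out by $s$, while the paper's argument applies to an arbitrary finite subscheme and arbitrary resolution; that extra generality is what the paper relies on for the non-reduced variants (Remarks \ref{Non.Reduced.Zeroes} and \ref{Caveat}). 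In short: your route buys a sharp reduction of the whole lemma to one classical trace identity; the paper's route buys a residue-free, self-contained proof of that identity in greater generality.
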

\noi A proof of the Lemma appears in Appendix \ref{Appendix.A}.  The Lemma implies  that in fact \[   \ker \delta_1 \ = \ V_1^*, \]
and Theorem \ref{TV.Type.Statement} is proved. 

\begin{remark} \textbf{(Non-reduced zero schemes).} \label{Non.Reduced.Zeroes}
It is not necessary to assume that the finite scheme $Z$ be reduced. In fact, since $Z$  is Gorenstein, one can associate to any subscheme $Z_1 \subseteq Z$ a residual scheme $Z_2 \subseteq Z$ having various natural properties: see \cite[p.\ 311 ff]{EGH} for a nice discussion. The hypothesis in Theorem \ref{GH.CB.Thm} should then be that $h$ vanishes on the scheme residual to a point $x \in Z$. In Theorem \ref{TV.Type.Statement} one works with a residual pair $Z_1, Z_2 \subseteq Z$. In this more general setting, one no longer has the embedding $\OO_{Z_1} \subseteq \OO_Z$ used in the proof. Instead, one replaces this with the canonical inclusion
\[ \omega_{Z_1} \otimes \omega_Z^*  \, \hookrightarrow \,\OO_Z, \]
and then the argument with duality goes through. We leave details to the interested reader. \qed  \end{remark}

We conclude this section by sketching an application of Theorem \ref{TV.Type.Statement} to statements, essentially due to Sun \cite{Sun},  of Cayley-Bacharach type for determinantal loci. The present approach is somewhat different than that of \cite{Sun}, which uses Eagon-Northcott complexes.

We start with the set-up. Let $X$ be a smooth projective variety of dimension $n$, and let $E$ be a vector bundle on $X$ of rank $n + e$ for some $e \ge 0$. Suppose given sections 
\[   s_0, \ldots, s_e \, \in \, \Gamma \big( X, E \big) \]
that drop rank simply along a reduced finite set $Z$, so that once again $\#Z  = \int c_n(E)$. Denote by $ W \subseteq H^0(E)$ the $(e+1)$-dimensional subspace spanned by the $s_i$, and write $V = W^*$ for the dual of $W$. The $s_i$ determine a natural vector bundle map $w: W_X \lra E$, where $W_X = W \otimes_{\CC} \OO_X$ is the trivial vector bundle with fibre $W$. By assumption $w$ has rank exactly $e$ at each point of $Z$, and hence its dual determines an exact sequence
\begin{equation} \label{Det.Locus.Eqn.1}  E^* \overset{ u} \lra V_X \lra B _Z \lra 0.  \end{equation}
where $V_X = V \otimes_{\CC} \OO_X$ and $B_Z$ is a line bundle on $Z$. In particular, there is a natural mapping 
\[  \phi \, = \, \phi_u \,  : \, Z \lra \PP(V) = \PP^e. \] 
More concretely, $\phi$ sends each point $z \in Z$ to the one-dimensional quotient $\coker \big( u(z)\big)$ of $V$. In particular, for any subset $Z^\pr \subseteq Z$, and any $k \ge 0$, one gets a homomorphism
\[ \rho_{Z^\pr, k} \, : \, H^0\big(\OO_{\PP}(k)\big)\lra H^0\big( \phi_* \OO_{Z^\pr}(k) \big ) \, = \, H^0 \big (Z^\pr, B^{\otimes k}_Z \, |\, {Z^\pr} \big).\]
Equivalently, this is the mapping
\[   S^k V \lra \HH{0}{Z^\pr}{B^{\otimes k}_Z \mid Z^\pr} \]
arising from \eqref{Det.Locus.Eqn.1}.

\begin{theorem} \label{CB.for.Det.Loci}
In the situation just described, set $L  = \det E$, and write $Z = Z_1 \sqcup Z_2$ as the disjoint union of two non-empty subsets. Define
\begin{equation} \label{Def.c1.c2.CB.Det.Loci}
\begin{gathered}
c_1 \ = \ \dim \coker \Big(   \rho_{Z_1, n-1}: H^0\big(\OO_{\PP}(n-1)\big)\lra H^0\big( \phi_* \OO_{Z_1}(n-1) \big )\Big)\\ 
c_2 \ = \  \dim \coker \Big( H^0\big(X,  I_Z (K_X + L)\big) \hookrightarrow H^0\big(X, I_{Z_2}(K_X + L) \big) \Big)\end{gathered}
\end{equation}
Then $c_2 \le c_1$.
\end{theorem}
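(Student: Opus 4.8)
The plan is to realize the finite determinantal locus as the honest zero-locus of a section of a rank-$(\dim)$ bundle on a projective bundle, and then to quote Theorem~\ref{TV.Type.Statement} there. Take $P = \PP(V) = \PP^e$ in the Grothendieck convention, so that the tautological quotient reads $V_P \lra \OO_P(1)$ and $H^0(\OO_P(k)) = S^kV$, and work on the smooth projective $(n+e)$-fold $Y = X \times P$ with projections $\pro_X, \pro_P$. Set $\tilde E = \pro_X^* E \otimes \pro_P^* \OO_P(1)$, a bundle of rank $n+e = \dim Y$ with $\det \tilde E = L \boxtimes \OO_P(n+e)$. Composing the pullback of $u$ from \eqref{Det.Locus.Eqn.1} with the tautological quotient gives a map $\pro_X^* E^* \xrightarrow{\pro_X^* u} V_Y \lra \pro_P^*\OO_P(1)$, that is, a section $\tilde s \in \GGG{Y}{\tilde E}$ under $\tilde E = (\pro_X^*E^*)^\vee \otimes \pro_P^*\OO_P(1)$.

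The first task is to identify $\Zero{\tilde s}$. Over $x \notin Z$ the map $u(x)\colon E_x^* \to V$ is surjective, so the composite is nowhere-vanishing on $\{x\}\times P$; over $z \in Z$ the image of $u(z)$ is a hyperplane in $V$, and the composite vanishes at $(z,[\lambda])$ exactly when the one-dimensional quotient $[\lambda]$ annihilates that hyperplane, namely at the single point $(z, \phi(z))$. Hence set-theoretically $\Zero{\tilde s} = \Gamma_\phi$, the graph of $\phi$, a reduced set of $\#Z$ points; in particular the zero-scheme is finite of the expected codimension $n+e$. Its length is therefore $\int_Y c_{n+e}(\tilde E)$, and pushing the class $\prod_i (a_i + \xi)$ forward along $\pro_X$ --- where the $a_i$ are the Chern roots of $\pro_X^*E$ and $\xi = c_1(\pro_P^*\OO_P(1))$ --- evaluates this to $\int_X c_n(E) = \#Z$. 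Since $\Gamma_\phi$ already contributes $\#Z$ reduced points, each must have length one, so $\tilde s$ vanishes simply. I expect this verification of transversality to be the main technical point; the conceptual heart is the passage to $Y$ via the graph of $\phi$, after which everything is bookkeeping.

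With $\tilde s$ vanishing simply on $\Gamma_\phi \cong Z$, I would apply Theorem~\ref{TV.Type.Statement} on $Y$ to the bundle $\tilde E$, the splitting $\Gamma_\phi = \Gamma_1 \sqcup \Gamma_2$ induced by $Z = Z_1 \sqcup Z_2$, and the auxiliary line bundle $A = \pro_P^*\OO_P(n-1)$. Using $K_Y = K_X \boxtimes \OO_P(-e-1)$ one finds $K_Y + \det \tilde E = (K_X+L)\boxtimes \OO_P(n-1)$, and hence $K_Y + \det\tilde E - A = \pro_X^*(K_X+L)$. The key feature of this choice of $A$ is that it simultaneously produces the map $\rho_{Z_1,n-1}$ and the trivial twist on the $X$-side.

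It then remains to match the two cokernels of Theorem~\ref{TV.Type.Statement} with $c_1$ and $c_2$. Since $H^0(Y,A) = H^0(\OO_P(n-1)) = S^{n-1}V$ and $A$ restricts along $\Gamma_\phi \cong Z$ to $\phi^*\OO_P(n-1) = B_Z^{\otimes(n-1)}$, the restriction $H^0(A) \to H^0(A \otimes \OO_{\Gamma_1})$ is exactly $\rho_{Z_1,n-1}$, so the space $V_1$ of Theorem~\ref{TV.Type.Statement} satisfies $\dim V_1 = c_1$. For the other cokernel, $\pro_P^*\OO_P$ is trivial and $P$ is connected, so pushing forward along $\pro_X$ identifies $H^0\big(Y, I_\Gamma \otimes \pro_X^*(K_X+L)\big)$ with $H^0\big(X, I_{Z'}(K_X+L)\big)$ for the pairs $(\Gamma,Z') = (\Gamma_\phi, Z)$ and $(\Gamma_2, Z_2)$ --- a section pulled back from $X$ vanishes at $(z,\phi(z))$ iff it vanishes at $z$ --- compatibly with the inclusions. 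Thus the space $V_2$ of Theorem~\ref{TV.Type.Statement} satisfies $\dim V_2 = c_2$, and the inequality $\dim V_2 \le \dim V_1$ becomes $c_2 \le c_1$, as desired.
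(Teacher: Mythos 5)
Your proof is correct and is essentially the paper's own argument: the paper likewise passes to $Y = X \times \PP(V)$, realizes $Z$ (via the graph embedding determined by the presentation \eqref{Det.Locus.Eqn.1}) as the simple zero-locus of a section of $F = \pro_1^*E \otimes \pro_2^*\OO_{\PP(V)}(1)$, and applies Theorem \ref{TV.Type.Statement} with $A = \pro_2^*\OO_{\PP(V)}(n-1)$, matching the cokernels exactly as you do. The only difference is that you verify the transversality of $\tilde s$ explicitly by the Chern class count, a point the paper treats as part of a ``well-known construction''; this is a sound and welcome addition, not a deviation.
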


\begin{example}
Let $C , D \subseteq \PP^2$ denote respectively a cubic and a quartic curve meeting transversely at twelve points. Take $O \in C \cap D$, and set \[Z \ =\  (C \cap D) - \{ O \}, \] so that $Z$ consists of eleven of the twelve intersection points of $C$ and $D$. Then $Z$ is the degeneracy locus of a map
\[   
 \OO^2_{\PP^2} \overset{w} \lra \OO_{\PP^2}(1) \oplus \OO_{\PP^2}(2) \oplus \OO_{\PP^2}(3), 
\footnote{If $O$ is defined  by linear forms $L_1$ and $L_2$, then the equations defining $C$ and $D$  are expressed as
\[ A_1L_1 - A_2 L_2 \ \ , \ \ B_1L_1 -B_2L_2, \]
where $\deg A_i = 2$, $\deg B_i = 3$. The map $w$ is then given by the matrix
$
\left (
\begin{matrix}
L_1&  L_2 \\ A_2 & A_1 \\ B_2 & B_1
\end{matrix}
\right ).
$
  }
 \]
whose dual  \eqref{Det.Locus.Eqn.1} has the form
\[ \OO_{\PP^2}(-1) \oplus \OO_{\PP^2}(-2) \oplus \OO_{\PP^2}(-3) \lra \OO^2_{\PP^2}. \]
Now pick two points $P, Q \in Z$ and take $Z_1 = \{ P, Q \}$. If the line joining $P$ and $Q$ passes through $O$, then $c_1 = 1$, otherwise $c_1 = 0$. According to the the Theorem, in the former case there may be an additional cubic passing through the remaining nine points of $Z$, but in the latter case there is none. If we take $D$ to be the union of a cubic $C^\pr$ and a line $L$, and if $P, Q, O \in L$, then in fact $c_2 = 1$. (See Figure \ref{DetExample}.) \qed
\begin{figure}
\includegraphics[scale = .18]{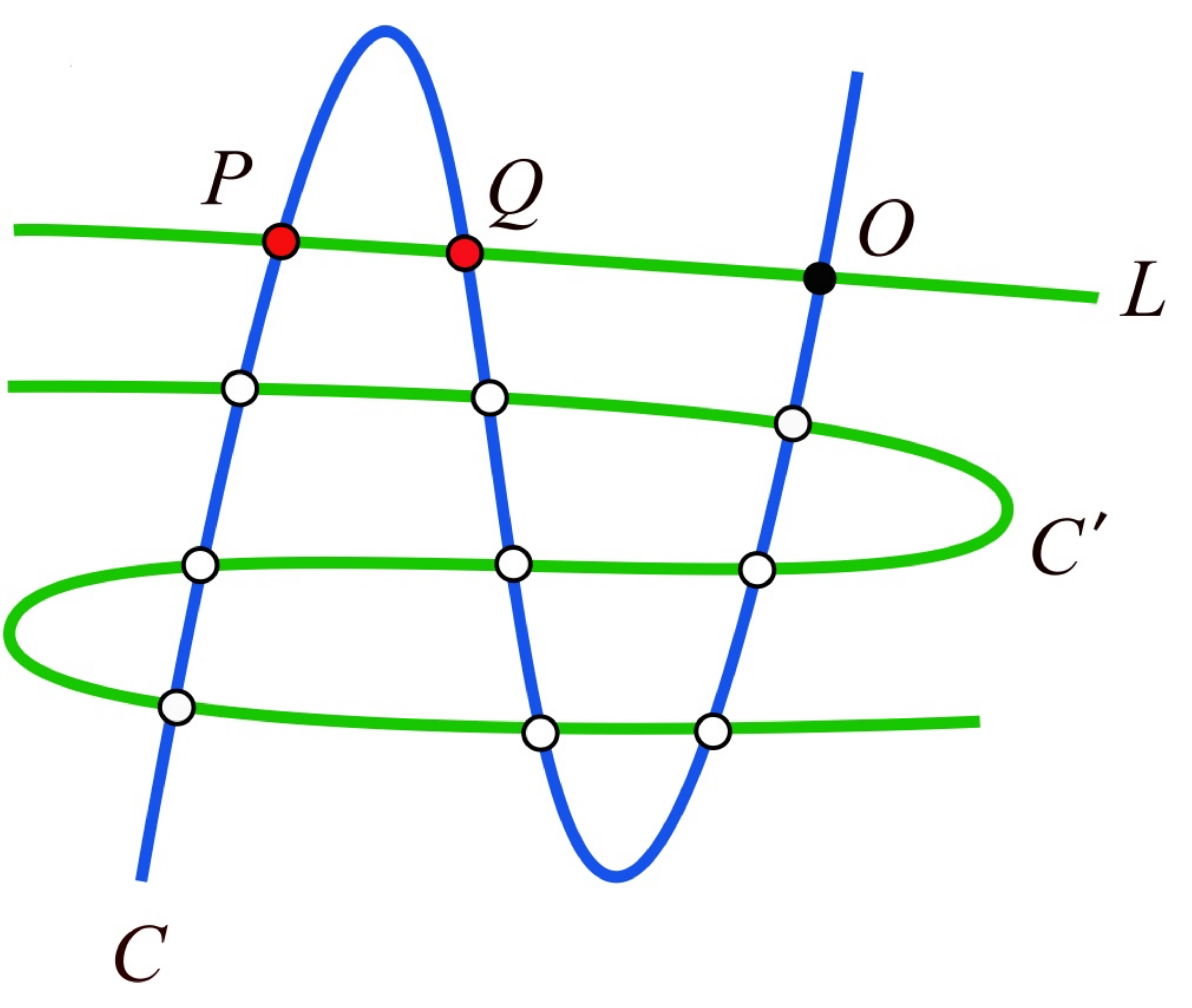}
\caption{Eleven points in intersection of cubic and quartic}\label{DetExample}
\end{figure}
\end{example}

\begin{proof}[Proof of Theorem \ref{CB.for.Det.Loci}] Set
$  Y= \PP(V_X) = X \times \PP(V)$, 
with projections 
\[ \pro_1 : Y \lra X \ \ , \ \ \pro_2 : Y \lra \PP(V). \]
The plan is to use a well-known construction to realize $Z$ as the zero-locus of a section of a vector bundle on $Y$, and then apply Theorem \ref{TV.Type.Statement}. Specifically, the presentation \eqref{Det.Locus.Eqn.1} determines an embedding $Z \subseteq Y$ under which $B_Z = \pro_2^* \big(\OO_{\PP(V)}(1)\big)|Z$. Moreover, $Z$ is defined in $Y$ by the vanishing of the composition 
\[
\xymatrix{\pro_1^* \, E^* \ar[r]^{\pro_1^*(u)} \ar[dr]&\pro_1^* V_X \ar[d] \\
&\pro_2^* \, \OO_{\PP(V)}(1).
} \]
In other words, writing 
\[ F \ = \  \pro_1^*E \otimes \pro_2^* \OO_{\PP(V)}(1), \] $Z \subseteq Y$ is the zero-locus of a section
 $   s \in \Gamma(Y,F)$.
Note next that
\[   K_Y + \det F  \ = \ \pro_1^*\big( \OO_X( K_X + \det E \big) \big) \, \otimes\, \pro_2^* \OO_{\PP(V)}(n-1). 
\]
We now apply Theorem \ref{TV.Type.Statement} with  $A = \pro_2^*\OO_{\PP(V)}(n-1)$. Then on the one hand, for any $Z^\pr \subseteq Z$, the restriction
\[  \HH{0}{Y}{A}\lra  \HH{0}{Z^\pr}{A|Z^\pr} \]
is identified with the map $\rho_{Z^\pr , n-1}$ appearing in \eqref{Def.c1.c2.CB.Det.Loci}. On the other hand, since 
\[  K_Y + \det F  - A 
\ = \ \pro_1^* \big( K_X + \det E\big),\]
for any $Z^\pr \subseteq Z$ one has
\[ 
\HH{0}{Y}{\OO_Y(K_Y + \det F - A) \otimes I_{Z^\pr / Y}} \ = \ \HH{0}{X}{\OO_X(K_X + \det E) \otimes I_{Z^\pr / X}}.\]
The Theorem follows. 
\end{proof}

\begin{example}
When $Z_1$ consists of a single point in $Z$, we find that if $s_0, \ldots, s_{e} \in \Gamma \big(X, E\big)$  drop rank along $Z$, then any section of $\OO_X(K_X + L)$ vanishing at all but one of the points of $Z$ vanishes at the remaining one. (This is a special case of \cite[Theorem 4.1]{Sun}. It also can be deduced directly from the theorem of Griffiths-Harris.)  \end{example}

\begin{remark} \textbf{(More general degeneracy loci).} An analogous statement -- with essentially the same proof -- holds for more general determinantal loci. Specifically, consider vector bundles $V$ and $E$ of ranks $e+1$ and $n+e$, and suppose that $w : V^* \lra E$ is a homomorphism that drops rank simply on a reduced finite set $Z \subseteq X$. As above, this gives rise to a surjection
\[  E^* \lra  V \lra B \otimes \OO_Z \lra 0. \]
Then the statement of Theorem \ref{CB.for.Det.Loci} remains valid provided that one takes $L = \det E + \det V$ and
\[  c_1 \ = \ \dim \coker \Big(   \HH{0}{X}{S^{n-1}V}\lra \HH{0}{X}{B^{n-1} \otimes \OO_Z}\Big). \ \hskip 10pt\qed\]
\end{remark}

\begin{remark} \textbf{(Non-reduced degeneracy loci).}
\label{Caveat}
One can remove the hypothesis that the finite set  $Z$ be reduced by assuming instead that the map $w$ (and hence also $v$) drops rank by exactly one at every point $x \in Z$. In this case $\coker  (u)$ is still a line bundle on the degeneracy scheme $Z$ defined by the vanishing of the maximal minors of $u$, as one sees by locally pulling back $u$ from the space of all matrices. Then $Z$ again embeds in $Y$, where it is the zero-locus of a section of a vector bundle. In particular $Z$ is a local complete intersection scheme, and therefore Gorenstein, and one can proceed as in Remark \ref{Non.Reduced.Zeroes}. \qed \end{remark}

\section{Excess Vanishing}

 This section is devoted to the proofs of Theorems \ref{ExcessCB.Intro.B} and \ref{CB.Excess.TV.Intro} from the Introduction. 
 
 We begin with a quick review of the basic facts about multiplier ideals, referring to \cite[Chapter 9]{PAG} or \cite{PCMI} for details. Let $X$ be a smooth complex variety of dimension $n$, and let $\frb \subseteq \OO_X$  be a coherent sheaf of ideals on $X$. One associates to $\frb$ and its powers a \textit{multiplier ideal sheaf}
 \[ \MI{\frb^m} \, = \, \MMI{X}{\frb^m} \, \subseteq \, \OO_X,  \]
 as follows.
 Start by forming a log resolution
$  \mu : X^\pr \lra X$ of $\frb$, i.e. a proper birational map, with $X^\pr$ smooth, such that 
\[   \frb \cdot \OO_{X^\pr}  \, = \, \OO_{X^\pr}(-B) \]
where $B$ is an effective divisor on $X^\pr$ such that $B + K_{X^\pr / X}$ has simple normal crossing support. One then takes
\begin{equation} \label{Def.of.MI}
\MI{\frb^m} \ = \ \mu_* \left ( \OO_X \left ( K_{X^\pr / X} - mB\right)\right) . \footnote{More generally one can define $\MI{\frb^c}$ for any rational number $c > 0$, but we will not requiren this.}
\end{equation}
 One shows that the definition is independent of the choice of log-resolution.  The intuition is that these multiplier ideals measure the singularities of functions $f \in \frb$, with ``deeper" ideals corresponding to 
``greater singularities."

Multiplier ideals satisfy many  pleasant properties. We mention two here by way of orienting the reader. First, keeping notation as in \eqref{Def.of.MI}, one has:
\begin{equation} \label{Local.Vanishing}   R^j\mu_* \left ( \OO_X \left ( K_{X^\pr / X} - mB\right)\right) \ = \ 0 \ \ \text{for  } j > 0.\end{equation}
 This is known as the local vanishing theorem for multiplier ideals, and it guarantees in effect that the $\MI{\frb^m}$ will be particularly well-behaved. Secondly, Skoda's theorem states that
 \[   \MI{\frb^n} \ = \ \frb \cdot \MI{\frb^{n-1}}, \]
 where as always $n = \dim X$. In particular, $\MI{\frb^n} \subseteq \frb$, meaning that the multiplier ideals appearing in Theorems \ref{Excess.CB.1} and \ref{Excess.CB.2} below are at least as deep as $\frb$ itself. 
 
 \begin{example} \textbf{(Multiplier ideals of smooth subvarieties).}\label{MI.Smooth.Vars}
 Suppose that $W \subseteq X$ is a smooth subvariety of dimension $w$. Let us show that
 \[  \MMI{X}{I_W^m} \ = \  I_W^d, \]
 where $d = \max\{ 0, w + 1 + (m - n)\}$. 
 In fact, the blowing up $\mu  : X^\pr = \Bl_W(X)\lra X$  of $W$ is a log resolution, with
 \[ K_{X^\pr / X} = (n-w-1)E  \ \ , \ \ I_W \cdot \OO_{X^\pr} = \OO_{X^\pr }(-E),
 \]
 $E \subseteq \OO_{X^\pr}$ being the exceptional divisor. Therefore
 \[  \MI{I_W^m} \ = \ \mu_* \left ( \OO_X \left ( \left ( n - w - 1 -m \right) E \, \right)\right)  \ = \ I_W^d,\]
as claimed.
 \end{example}

 We now come to our main results. Let $X$ be smooth complex projective variety of dimension $n$, and $E$ a rank $n$ vector bundle on $X$ with $\det E = L$. 
 
 \begin{theorem}  \label{Excess.CB.1}
 Let $ s \in \GGG{X}{E}$ be a section whose zero-scheme is defined by the ideal
 \[   \frb \cdot \II_Z \ \subseteq \OO_X, \]
 where $Z \subseteq X$ is a non-empty reduced finite set, and $\frb \subseteq \OO_X$ is an arbitrary ideal whose zero-locus is disjoint from $Z$. Suppose that 
\[  h \, \in \, \Gamma \Big( X \, , \, \OO_X(K_X + L) \otimes \MI{\frb^n} \Big) \]
is a section vanishing at all but one of the points of $Z$. Then $h$ vanishes on the remaining point as well.
\end{theorem}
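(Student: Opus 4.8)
The plan is to reduce everything to the classical Griffiths--Harris theorem (Theorem \ref{GH.CB.Thm}) by passing to a log resolution $\mu : X^\pr \lra X$ of $\frb$ and transporting the whole situation upstairs, where the relevant section vanishes properly. Fix such a resolution, so that $\frb \cdot \OO_{X^\pr} = \OO_{X^\pr}(-B)$ with $B + K_{X^\pr/X}$ of simple normal crossing support. Since the zero-locus of $\frb$ is disjoint from $Z$, the map $\mu$ is an isomorphism over a neighborhood of $Z$; write $Z^\pr = \mu^{-1}(Z)$, a reduced finite set mapping isomorphically onto $Z$ and disjoint from both $B$ and the exceptional locus of $\mu$.

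First I would manufacture the correct bundle and section on $X^\pr$. The section $s$ defines a map $E^* \lra \OO_X$ with image $\frb \cdot I_Z$; pulling back and using $\frb \cdot \OO_{X^\pr} = \OO_{X^\pr}(-B)$, the image of $\mu^* E^* \lra \OO_{X^\pr}$ becomes $\OO_{X^\pr}(-B) \cdot (I_Z \cdot \OO_{X^\pr})$. In particular every local component of $\mu^* s$ is divisible by the equation of $B$, so $\mu^* s$ factors through a section
\[ s^\pr \, \in \, \GGG{X^\pr}{E^\pr}, \qquad E^\pr \ := \ \mu^* E \otimes \OO_{X^\pr}(-B). \]
The map $(E^\pr)^* \lra \OO_{X^\pr}$ determined by $s^\pr$ then has image $I_Z \cdot \OO_{X^\pr} = I_{Z^\pr}$, so $s^\pr$ vanishes precisely along the reduced finite set $Z^\pr$. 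As $E^\pr$ again has rank $n$ and $Z^\pr$ has codimension $n$, this vanishing is proper, and $s^\pr$ vanishes simply along $Z^\pr$ because near $Z^\pr$ everything is pulled back from a neighborhood of $Z$ on which $s$ vanishes simply.

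Next I would record the determinant and canonical class. Since $E^\pr$ has rank $n$, one has $\det E^\pr = \mu^* L \otimes \OO_{X^\pr}(-nB)$, and therefore
\[ K_{X^\pr} + \det E^\pr \ = \ \mu^*\big(K_X + L\big) \otimes \OO_{X^\pr}\big(K_{X^\pr/X} - nB\big). \]
Pushing forward and combining the projection formula with the definition \eqref{Def.of.MI} of the multiplier ideal (taking $m = n$),
\[ \mu_* \OO_{X^\pr}\big(K_{X^\pr} + \det E^\pr\big) \ = \ \OO_X(K_X + L) \otimes \mu_*\OO_{X^\pr}\big(K_{X^\pr/X} - nB\big) \ = \ \OO_X(K_X + L) \otimes \MI{\frb^n}. \]
Because $\mu$ is proper, taking global sections yields a canonical identification
\[ \HH{0}{X^\pr}{\OO_{X^\pr}(K_{X^\pr} + \det E^\pr)} \ = \ \HH{0}{X}{\OO_X(K_X + L) \otimes \MI{\frb^n}}. \]
Under it a section $h$ as in the statement corresponds to a section $h^\pr$ on $X^\pr$; and since $\mu$ is an isomorphism near $Z$ while $B$ and $K_{X^\pr/X}$ vanish there, $h^\pr$ coincides with $\mu^* h$ over a neighborhood of $Z^\pr$. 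Hence $h^\pr$ vanishes at a point $z^\pr \in Z^\pr$ exactly when $h$ vanishes at its image $z \in Z$. Applying Theorem \ref{GH.CB.Thm} to the data $E^\pr$, $s^\pr$, $Z^\pr$ on $X^\pr$ then gives the conclusion.

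The step I expect to be the main obstacle is the verification that $s^\pr$ vanishes in the expected codimension --- that once the divisorial part $B$ has been split off, the residual zero-scheme of $s^\pr$ is exactly the reduced finite set $Z^\pr$, with no spurious components or embedded structure introduced by the resolution. This rests on $B$ being the full divisorial part of $\frb \cdot \OO_{X^\pr}$ together with the disjointness hypothesis $\frb + I_Z = \OO_X$, which ensures that the pullback of $I_Z$ remains the reduced ideal of $Z^\pr$ away from the exceptional locus and thereby keeps the proper-vanishing hypothesis of Theorem \ref{GH.CB.Thm} intact.
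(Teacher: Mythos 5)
Your proposal is correct and takes essentially the same route as the paper's own proof: the same reduction to Theorem \ref{GH.CB.Thm} via a log resolution, the same twisted bundle $E^\pr = \mu^* E \otimes \OO_{X^\pr}(-B)$ with its induced section vanishing exactly on $Z^\pr$, and the same pushforward identification of $\GGG{X^\pr}{\OO_{X^\pr}(K_{X^\pr} + \det E^\pr)}$ with $\GGG{X}{\OO_X(K_X+L) \otimes \MI{\frb^n}}$. The one point to state more carefully is that $\mu$ being an isomorphism over a neighborhood of $Z$ is not automatic from the disjointness hypothesis for an arbitrary log resolution, but is arranged by choosing $\mu$ to be a composition of blowings up over $\textnormal{Zeroes}(\frb)$, exactly as the paper stipulates.
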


\begin{theorem} \label{Excess.CB.2}
In the setting of Theorem \ref{Excess.CB.1}, write $Z = Z_1 \sqcup Z_2$ as the disjoint union of two non-empty subsets, and fix an arbitrary line bundle $A$ on $X$. Write
\[ \begin{gathered}
v_1 \ = \ \dim  \coker \Big( H^0(A) \lra H^0\big(A \otimes \OO_{Z_1}\big) \Big) \\
v_2 \ = \ \dim \coker \Big( H^0\big( I_Z (K_X + L - A)\otimes \MI{
\frb^n}\big) \hookrightarrow H^0\big(I_{Z_2}(K_X + L - A) \otimes \MI{\frb^n}\big) \Big).
\end{gathered}
\]
Then $v_2 \le v_1$.
\end{theorem}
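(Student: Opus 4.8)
The plan is to reduce the statement to the already-established classical Theorem \ref{TV.Type.Statement} by passing to a log resolution $\mu : X' \to X$ of $\frb$ and realizing the multiplier-ideal-twisted sections upstairs as honest sections on $X'$. First I would form the log resolution $\mu : X' \to X$ with $\frb \cdot \OO_{X'} = \OO_{X'}(-B)$ and $B + K_{X'/X}$ of simple normal crossing support, as in \eqref{Def.of.MI}. Since the zero-locus of $\frb$ is disjoint from $Z$, the map $\mu$ is an isomorphism over a neighborhood of $Z$, so $Z$ (together with its partition $Z = Z_1 \sqcup Z_2$) lifts isomorphically to a finite reduced set in $X'$, which I will continue to denote $Z = Z_1 \sqcup Z_2$. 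The essential point is that the section $s$, viewed through the Koszul/Skoda mechanism, cuts out $Z$ properly on $X'$: pulling back $E$ to $E' = \mu^* E$ and using that $s$ defines $\frb \cdot I_Z$, one twists by $\OO_{X'}(K_{X'/X} - nB)$ to obtain on $X'$ a genuine rank-$n$ bundle together with a section whose scheme-theoretic zeroes are exactly $Z$, now with proper (codimension $n$) vanishing. This is precisely the situation to which the classical Theorem \ref{TV.Type.Statement} applies.

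The key computation is to identify the global sections. By the definition \eqref{Def.of.MI} together with the local vanishing theorem \eqref{Local.Vanishing} and the projection formula, I expect
\[
\mu_* \Big( \OO_{X'}(K_{X'} + \mu^*L - \mu^* A) \otimes \OO_{X'}(-nB) \otimes I_{Z'} \Big) \ = \ \OO_X(K_X + L - A) \otimes \MI{\frb^n} \otimes I_Z,
\]
and likewise with $Z$ replaced by $Z_2$. Here $K_{X'} = \mu^* K_X + K_{X'/X}$ supplies exactly the discrepancy term $K_{X'/X}$ in \eqref{Def.of.MI}, and the factor $\OO_{X'}(-nB)$ supplies the $-nB$; Skoda's theorem $\MI{\frb^n} = \frb \cdot \MI{\frb^{n-1}}$ guarantees $\MI{\frb^n} \subseteq \frb$, which is what makes $\MI{\frb^n} \cdot I_Z$ the honest zero-ideal of the twisted section upstairs. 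Taking $H^0$ of these identifications matches the groups defining $v_2$ on $X$ with the corresponding groups on $X'$. For the $v_1$ side, I would take the line bundle on $X'$ to be $A' = \mu^* A$; since $\mu$ is an isomorphism near $Z$, the restriction map $H^0(X', A') \to H^0(Z_1, A' \otimes \OO_{Z_1})$ has cokernel of the same dimension as $v_1$ (the pullback $H^0(X,A) \to H^0(X',\mu^*A)$ being an isomorphism by $\mu_* \OO_{X'} = \OO_X$ and $R^0$ with local vanishing, and $Z_1$ sitting in the isomorphism locus).

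Then Theorem \ref{TV.Type.Statement}, applied on $X'$ to the bundle $E'$, the section cutting out $Z$ properly, and the line bundle $A'$, yields $v_2' \le v_1'$, and the identifications above give $v_2 = v_2'$ and $v_1 = v_1'$, completing the proof. The main obstacle I anticipate is the careful bookkeeping in the displayed $\mu_*$ identity: one must verify that the ideal sheaf $I_{Z'}$ upstairs, twisted appropriately, pushes forward to $\MI{\frb^n} \cdot I_Z$ and not merely to $\MI{\frb^n}$, which is where the disjointness of $Z$ from the zero-locus of $\frb$ (ensuring $\mu$ is an isomorphism near $Z$, so that $I_{Z'}$ and $\OO_{X'}(-nB)$ interact trivially) and the local vanishing theorem both enter. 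Once that push-forward identity is secured — essentially the same computation that produces the Skoda complex (Skod) referenced in the Introduction — the reduction to the classical statement is formal.
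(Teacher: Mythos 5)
Your proposal is correct and follows the paper's own proof essentially verbatim: pass to a log resolution $\mu : X' \to X$ that is an isomorphism near $Z$, observe that $s$ induces a section of a twisted bundle on $X'$ vanishing exactly on the lift of $Z$, identify the $\MI{\frb^n}$-twisted section spaces downstairs with honest section spaces upstairs via the projection formula and $K_{X'} = \mu^*K_X + K_{X'/X}$, and apply Theorem \ref{TV.Type.Statement} with $A' = \mu^*A$. The one imprecision is your description of the bundle: the section with proper vanishing lives in $E' = \mu^*E \otimes \OO_{X'}(-B)$ (so $\det E' = \mu^*L - nB$), not in a twist of $\mu^*E$ by $\OO_{X'}(K_{X'/X} - nB)$; the relative canonical enters only in the push-forward computation, which your displayed identity in fact handles correctly.
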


\noi Observe that Theorem \ref{ExcessCB.Intro.A}
 from the Introduction follows from \ref{Excess.CB.1}
 together with Example \ref{MI.Smooth.Vars}. 
 
 \begin{remark} \textbf{(Non-reduced zeros).} \label{Non.Red.Excess}
Provided that one proceeds as in Remark \ref{Non.Reduced.Zeroes}, one can remove the assumption that the zero-scheme $Z$ be reduced. \qed
 \end{remark}
 
 \begin{proof}[Proof of Theorems \ref{Excess.CB.1} and \ref{Excess.CB.2}] We will deduce both results from the corresponding statements in \S 1. Specifically, let 
$  \mu : X^\pr \lra X$ be a log resolution of $\frb$, with \[ \frb \cdot \OO_{X^\pr} \, = \, \OO_{X^\pr}(-B), \] where $B$ is an effective divisor on $X^\pr$ with SNC support. We may and do suppose that $\mu$ is constructed by a sequence of blowings up over Zeroes$(\frb)$, so that in particular $\mu$ is an isomorphism over a neighborhood of $Z$. Therefore $Z$ embeds naturally as a subset $Z^\pr \subseteq X^\pr$. 

By assumption the image of the natural mapping $E^* \lra \OO_X$ determined by $s$ is the ideal $\frb \cdot \II_{Z/X}$. This mapping pulls back to a surjection
\[   
\xymatrix{
\mu^* E^* \ar@{->>}[r] & \OO_{X^\pr} (-B) \cdot \II_{Z^\pr/X^\pr}. }
\]
In particular, setting \[ E^\pr \ =\  \mu^* E \otimes \OO_{X^\pr}(-B), \] $s$ gives rise to a section
$s^\pr\in \GGG{X^\pr}{ E^\pr}$ vanishing exactly on $Z^\pr \subseteq X^\pr$. 

For \ref{Excess.CB.1}, we apply  Theorem \ref{GH.CB.Thm} to this section. That result asserts that every section
\[    h^\pr \ \in \ \GGG{X^\pr}{\OO_{X^\pr}(K_{X^\pr} + \det E^\pr)} \] vanishing at all but one of the points of $Z^\pr$ also vanshes at the remaining one. But observe that
\[   K_{X^\pr} + \det E^\pr \ \lin (K_{X^\pr /X} - nB) + \mu^*(K_X + \det E).\]
Therefore
\[   \mu_* \big( \OO_{X^\pr}(K_{X^\pr} + \det E^\pr )\big) \ = \ \OO_X(K_X + \det E) \otimes \MI{\frb^n},
 \]
and in particular
\[
\GGG{X}{\OO_{X}(K_{X} + \det E) \otimes \MI{\frb^n}} \ = \ \GGG{X^\pr}{\OO_{X^\pr}(K_{X^\pr} + \det E^\pr)}.
\]
Recalling that $\mu$ is an isomorphism over a neighborhood of $Z$, Theorem \ref{Excess.CB.1} follows.

Theorem \ref{Excess.CB.2} follows in a similar manner from Theorem \ref{TV.Type.Statement}. Staying in the same setting, write $Z^\pr = Z_1^\pr \sqcup Z_2^\pr$ for the decomposition of $Z^\pr$ determined by $Z_1$ and $Z_2$, and put $A^\pr = \mu^* A$. Then 
\[
\HH{0}{X^\pr}{A^\pr} \ = \ \HH{0}{X}{A}, 
\]
and hence 
\[  v_1 \ = \ \dim  \coker \Big( H^0(A^\pr) \lra H^0\big(A^\pr \otimes \OO_{Z_1^\pr}\big) \Big).
\]
Moreover
\[
\begin{gathered}
\HH{0}{X}{ I_Z (K_X + L - A)\otimes \MI{
\frb^n} } \ = \ \HH{0}{X^\pr}{ I_{Z^\pr} (K_{X^\pr} + \det E^\pr - A^\pr)}
  \\ \HH{0}{X}{ I_{Z_2}(K_X + L - A)\otimes \MI{
\frb^n} } \ = \ \HH{0}{X^\pr}{ I_{Z_2^\pr} (K_{X^\pr} + \det E^\pr - A^\pr)},
\end{gathered}
\]
and hence \ref{TV.Type.Statement} yields \ref{Excess.CB.2}. 
\end{proof}

\begin{example} \textbf{(Skoda--Koszul complex).} \label{Skoda.Complex}
Let $ s \in \GGG{X}{E}$ be as in the hypothesis of Theorems \ref{Excess.CB.1} and \ref{Excess.CB.2}. If $\dim \textnormal{Zeroes}(s) \ge 1$, then the Koszul complex determined by $s$ is not exact. However we assert that the Koszul complex determined by $s$ contains an \textit{exact} subcomplex
 \[ 0 \lra \Lambda^n E^* \lra \Lambda^{n-1} E^* \otimes \MI{\frb} \lra \ldots \lra  E^* \otimes \MI{\frb^{n-1}} \lra \MI{\frb^n} \cdot \II_Z \lra 0.\tag{*} \]
In fact, consider the Koszul complex on $X^\pr$ arising from $s^\pr \in \GGG{X^\pr}{E^\pr}$:
\[0 \lra \Lambda^n  E^\pr {}^* \lra \Lambda^{n-1} E^\pr{}^* \lra \ldots \lra E^\pr{}^* \lra \II_{Z^\pr} \lra 0.\]
It is exact since $s^\pr$ vanishes in codimension $n$. 
Twisting  by $\OO_{X^\pr}(K_{X^\pr/X} - nB)$, one arrives at an exact sequence on $X^\pr$ with terms of the form
\[  \mu^* \big(  \Lambda^{n-i} E^* \big) \otimes \OO_{X^\pr}(K_{X^\pr /X} -iB). \]
These have vanishing higher direct images thanks to  \eqref{Local.Vanishing}, and it follows that the direct image of the twisted Kosul complex remains exact. But
\[
\mu_* \left( \mu^*\left(\Lambda^{n-i} E^* \right) \otimes \OO_{X^\pr}(K_{X^\pr /X} -iB)\right) \ = \ \Lambda^{n-i} E^* \otimes \MI{\frb^{i}},
\]
yielding the  long exact sequence (*). \qed 
\end{example}

\begin{example} \textbf{(Statements of Li-type).} \label{Li-Type.Statement} By adjusting the numerics, one can deduce from Theorem \ref{ExcessCB.Intro.A} statements closer to 
 the spirit of \cite{Li}. For example, suppose that $W \subseteq \PP^n$ is a smooth variety of dimension $w$ that is cut out scheme-theoretically by hypersurfaces of degree $e$, and consider hypersurfaces 
$D_1, \ldots, D_n$ of degrees $d_1, \ldots , d_n$ such that
\[   D_1 \cap \ldots \cap D_n \ =_{\text{scheme-theoretically}} W \sqcup Z, \]
where $Z$ is a non-empty reduced finite set. Then any hypersurface $H$ with
\[   \deg H \ = \ \Big( \sum d_i \Big) - (n+1) - w\cdot e \]
passing through $W$ and all but one of the points of $Z$ must also pass through the remaining point of $Z$. For instance, in the example from the Introduction this applies if $H$ is a surface of degree $d-2$ passing through $C$ and all but one of the $(d-2)$ points of $Z$. (In fact, we may choose hypersurfaces $H_1, \ldots, H_w$ of degree $e$ passing through $W$ but missing every point of $Z$. The assertion then follows by applying Theorem \ref{ExcessCB.Intro.A} to the hypersurface $H + H_1 + \ldots + H_w$.) \qed
\end{example}

\begin{example} \textbf{(Fibres of rational coverings of projective space).} Let $X$ be a smooth projective variety of dimension $n$,  let 
\[   f : X \dra \PP^n\]
be a generically finite rational mapping, and let $Z \subseteq X$ be a generic fibre of $f$. Denote by $D \subseteq X$ the proper transform of a hyperplane in $\PP^n$, so that $f$ is defined by a linear series $\linser{V} \subseteq \linser{D}$, with base ideal $\frb = \frb\big( \linser{V} \big)$. Then $Z$ is the isolated zero-locus of a section of $\OO_X^n(D)$ vanishing also along $\frb$, so Theorem \ref{Excess.CB.1} implies:

\parbox{.6in}{ (*)}  \parbox{4.25in}{Any section of $\OO_X(K_X+nD) \otimes \MI{\frb^n}$ vanishing at all but one of the points of $Z$ also vanishes at the remaining one. 
} 

\noi Observe that we can find a section $t \in \GGG{X}{\OO_X(D) \otimes \frb}$ not vanishing at any point of $Z$, and then multiplication by $t^n$ determines an embedding
\[  \OO_X(K_X) \hookrightarrow \OO_X\big(K_X + nD\big) \otimes \MI{\frb^n} \]
that is an isomorphism along $Z$. So we recover the statement --  proved using Mumford's trace in \cite{BCD} -- that $Z$ satisfies the Cayley-Bacharach property with respect to $\linser{K_X}$. 
However (*) is a priori stronger since  $\linser{\OO_X(K_X+nD) \otimes \MI{\frb^n}}$ is typically larger than $\linser{K_X}$. The statements for $K_X$ are used  in  \cite{BCD} and \cite{BDELU} to study the \textit{degree of irrationality} of $X$ -- ie the least degree of a covering $f : X\dra \PP^n$. It would be interesting to know if (*) can lead to any improvements.   In a similar vein, given an arbitrary line bundle $A$, Theorem \ref{Excess.CB.2} leads to a statement involving the linear series $\linser{K_X - A}$ whose formulation we leave to the reader.  \qed
\end{example}

\begin{remark} \textbf{(Degeneracy loci).} We do not know whether or how one can generalize Theorem \ref{CB.for.Det.Loci}
 to the case of vector bundle maps with excess degeneracies.   \qed \end{remark}

\appendix
\section  {Proof of Lemma \ref{A.Dual.Lemma}} \label{Appendix.A}

We sketch here one way to verify the identification asserted in Lemma \ref{A.Dual.Lemma}. For clarity we work in a slightly more general setting.

Consider then a finite subscheme $Z \subseteq X$ of a smooth  projective variety of dimension $n$. Fix a locally free acyclic complex 
\[
 \ 0 \lra L_{-n} \lra L_{-n + 1} \lra \ldots \lra L_{-1} \lra L_0  \lra 0, \tag{$L_\bullet$}
\]
with $L_0 = \OO_X$ resolving $\OO_Z$ as an $\OO_X$-module, so that $\OO_Z = \mathcal{H}^0(L_\bullet)$. Let $A$ be an arbitrary line bundle on  $X$, and consider the complex
$  {M}_{\bullet} = \mathbf{D}_X({L_\bullet}\otimes A) =  {L}_\bullet^{\spcheck} \otimes A^* \otimes \omega_X[n]$. This has the form
\[
0 \lra M_{-n} \lra M_{-n+1} \lra \ldots \lra M_{-1} \lra M_{0} \lra 0, \tag{${M}_\bullet$}
\]
where $M_{-n + i} = L_i ^*\otimes \omega_X \otimes A^*$, and ${M}_\bullet$ is an acyclic resolution of  the sheaf $\mathit{Ext}^n(A \otimes \OO_Z, \omega_X)$. 
Breaking the long exact sequence
\[
\footnotesize
0 
 \to L_0^* \otimes \omega_X \otimes A^* \to L_1^* \otimes \omega_X \otimes A^*\to \ldots \to L_{-n}^* \otimes \omega_X \otimes A^* \to\mathit{Ext}^n(A \otimes \OO_Z, \omega_X) \to 0
\]
\normalsize
into short exact sequences and taking cohomology, and recalling that $L_0 = \OO_X$, one arrives at a homomorphism:
\[
\delta \, : \, \HH{0}{X}{\mathit{Ext}^n(A \otimes \OO_Z, \omega_X) } \lra \HH{n}{X}{\omega_X \otimes A^*}
\]
Now
\[ \HH{0}{X}{\mathit{Ext}^n(A \otimes \OO_Z, \omega_X) } \ = \ \textnormal{Ext}^n\big( A \otimes \OO_Z, \omega_X \big) \]
is Grothendieck dual to $\HH{0}{X}{A \otimes \OO_Z}$, and $\HH{n}{X}{\omega_X \otimes A^*}$ is dual to $\HH{0}{X}{A}$, 
so $\delta$ is identified with a mapping
\[
\delta^\pr : \HH{0}{X}{A \otimes \OO_Z}^* \lra \HH{0}{X}{A}^*.
\]
\begin{proposition}
The homomorphism $\delta^\pr$ is dual to the  restriction mapping 
\[  \rho : \HH{0}{X}{A} \lra \HH{0}{X}{A \otimes \OO_Z}.\]
\end{proposition}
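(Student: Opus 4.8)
The plan is to recognize the whole construction as an instance of the functoriality of Grothendieck--Serre duality, thereby reducing the assertion to the associativity of the Yoneda product. Write $\mathbf{D}_X(-) = R\mathcal{H}om(-,\omega_X[n])$ for the duality functor on $D^b_{\mathrm{coh}}(X)$, and let $\rho' : A \to A \otimes \OO_Z$ be the natural surjection obtained by tensoring the quotient $\OO_X \to \OO_Z$ with $A$, so that $\rho = H^0(\rho')$.

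First I would identify the connecting homomorphism $\delta$ intrinsically. Tensoring the augmentation $L_\bullet \to \OO_Z$ with $A$ exhibits $L_\bullet \otimes A$ as a locally free resolution of $A \otimes \OO_Z$, and since $L_0 = \OO_X$ the map $\rho'$ is modeled by the chain map $A[0] \to L_\bullet \otimes A$ that is the identity in degree $0$ and zero elsewhere. Applying $\mathbf{D}_X$ dualizes this to a chain map $M_\bullet \to (\omega_X \otimes A^*)[n]$, which is precisely the projection of $M_\bullet$ onto its bottom term $M_{-n} = \omega_X \otimes A^*$. The content of this step is the standard fact that the iterated connecting homomorphism produced by splicing the acyclic resolution $M_\bullet$ into short exact sequences coincides with the map induced on $\mathbb{H}^0$ by this projection. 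Granting this, $\delta$ is exactly $\mathbb{H}^0(\mathbf{D}_X(\rho'))$ once one uses the identifications $\mathbb{H}^0(\mathbf{D}_X(A\otimes\OO_Z)) = H^0(X,\mathit{Ext}^n(A\otimes\OO_Z,\omega_X))$ --- the local-to-global spectral sequence degenerates since $A\otimes\OO_Z$ is supported in dimension zero, so $\mathit{Ext}^q(A\otimes\OO_Z,\omega_X) = 0$ for $q\neq n$ --- and $\mathbb{H}^0(\mathbf{D}_X(A)) = H^n(X,\omega_X \otimes A^*)$.

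Next I would invoke the naturality of Serre duality, which supplies an isomorphism of contravariant functors $\mathbb{H}^0(\mathbf{D}_X(-)) \cong H^0(X,-)^*$ on $D^b_{\mathrm{coh}}(X)$. Applied to $\rho'$ this says precisely that $\mathbb{H}^0(\mathbf{D}_X(\rho'))$ is the transpose of $H^0(\rho') = \rho$, and combined with the previous step it yields $\delta^\pr = \rho^*$. Equivalently, and more transparently, one can run the argument through the Yoneda product: the Serre pairing is Yoneda composition into $\textnormal{Ext}^n(\OO_X,\omega_X) = H^n(X,\omega_X)$ followed by the trace, the map $\delta$ becomes precomposition $\xi \mapsto \xi \circ \rho'$, and for $a \in H^0(X,A) = \textnormal{Hom}(\OO_X,A)$ the required identity is just associativity, $(\xi\circ\rho')\circ a = \xi\circ(\rho'\circ a)$, together with $\rho'\circ a = \rho(a)$.

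The main obstacle is the bookkeeping in the first step: checking that the iterated connecting homomorphism of the spliced resolution genuinely agrees with $\mathbb{H}^0$ of the projection $M_\bullet \to M_{-n}[n]$, and that this projection is $\mathbf{D}_X$ applied to the chain-level model of $\rho'$. This is a diagram chase in the derived category requiring care with the shift by $n$, with the signs introduced by dualizing chain maps, and with the spectral-sequence degeneration; once these compatibilities are pinned down, everything else is formal.
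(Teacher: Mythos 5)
Your proposal is correct and takes essentially the same route as the paper: model the restriction by the chain map $A[0] \lra L_\bullet \otimes A$ that is the identity in degree $0$, observe that dualizing gives the projection $M_\bullet \lra M_{-n}[n] = \omega_X \otimes A^*[n]$, and invoke the functoriality of Grothendieck--Verdier duality to identify $\delta^\pr$ with the transpose of $\rho$. The ``standard fact'' you flag as the main obstacle --- that the iterated connecting homomorphism obtained by splicing $M_\bullet$ into short exact sequences agrees with $\mathbf{H}^0$ of that projection --- is precisely what the paper isolates as its closing Lemma and proves by an induction on the naive truncations $(M_{\bullet})_{\le -i}$.
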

\noi The first statement in Lemma \ref{A.Dual.Lemma} follows from this together with the self-duality of the Koszul complex. We leave the second statement -- involving the subscheme $Z_1$ of $Z$ -- to the reader.
\begin{proof} 
This follows from the functoriality of  Grothendieck--Verdier duality. Specifically, we view  $L_\bullet$ as representing $\OO_Z$ in D$^b(X)$. Recalling again that $L_0 = \OO_X$, the restriction $A \lra A \otimes \OO_Z$ is represented by the natural map of complexes
$  u :  A[0] \lra L_{\bullet} \otimes A$, and $\rho: H^0(A) \lra H^0(A \otimes \OO_Z)$ is given by the resulting homomorphism
\[
\mathbf{H}^0(u):\bHH{0}{X}{A[0]} \lra \bHH{0}{X}{L_\bullet\otimes A}
\]
on hypercohomology. Now the map $\mathbf{D}(u) : \mathbf{D}_X(L_\bullet \otimes A) \lra \mathbf{D}_X(A[0])$
determined by $u$ is the evident morphism 
\[ M_\bullet \lra M_{-n}[n] = \omega_X \otimes A^*[n].\] By Grothendieck--Verdier duality,  $\mathbf{H}^0(u)$ is dual to the resulting homomorphism
\[ \bHH{0}{X}{M_\bullet} \lra \bHH{0}{X}{\omega_X \otimes A^* [n]} .\]
The Proposition is therefore  a consequence of the following 
\end{proof}

\begin{lemma}
Consider an acyclic complex
\[
0 \lra M_{-n} \lra M_{-n+1} \lra \ldots \lra M_{-1} \lra M_{0} \lra 0, \tag{${M}_\bullet$}
\]
resolving a sheaf $\mathcal{F}$ on the $n$-dimensional projective variety $X$. Then the mapping
\[ 
\HH{0}{X}{\mathcal{F}} \lra \HH{n}{X}{M_{-n}} 
\]
determined by splitting $({M}_\bullet)$ into short exact sequences and taking cohomology coincides with the induced map on hypercohomology
\[
\bHH{0}{X}{{M}_\bullet} \lra \bHH{0}{X}{M_{-n}[n]}. 
\]
\end{lemma}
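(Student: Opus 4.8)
The plan is to strip the statement down to a piece of homological bookkeeping and then read off both maps from a single double complex. First I would record the identifications that give the statement content: since $M_\bullet$ is exact except in degree $0$, where its cohomology sheaf is $\FF$, the augmentation $M_\bullet \lra \FF[0]$ is a quasi-isomorphism, so $\bHH{0}{X}{M_\bullet} = \HH{0}{X}{\FF}$; and since $M_{-n}[n]$ is the sheaf $M_{-n}$ placed in degree $-n$, one has $\bHH{0}{X}{M_{-n}[n]} = \HH{n}{X}{M_{-n}}$. Moreover the morphism inducing the second map is the honest quotient $p : M_\bullet \twoheadrightarrow M_\bullet / \sigma_{\ge -n+1}M_\bullet = M_{-n}[n]$ onto the bottom term of the brutal truncation. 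On the other side, splitting $M_\bullet$ produces syzygy sheaves $Q_k = \coker(M_{-n+k-1} \lra M_{-n+k})$ fitting into short exact sequences
\[ 0 \lra Q_{k-1} \lra M_{-n+k} \lra Q_k \lra 0 \qquad (1 \le k \le n), \]
with $Q_0 = M_{-n}$ and $Q_n = \FF$, and the iterated map is the composite $\delta = \delta_1 \circ \cdots \circ \delta_n$ of their connecting homomorphisms $\delta_k : \HH{n-k}{X}{Q_k} \lra \HH{n-k+1}{X}{Q_{k-1}}$.

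Next I would resolve the whole complex by a Cartan--Eilenberg (or Godement, or \v{C}ech) resolution $\mathcal{C}^{\bullet,\bullet}$, so that the total complex of $\Gamma(X,\mathcal{C}^{\bullet,\bullet})$ computes $\bHH{\bullet}{X}{M_\bullet}$, with columns indexed by resolution degree $q$ and rows by the complex degree $p \in \{-n,\ldots,0\}$. The key observation is that each $\delta_k$ is, by its snake-lemma construction, exactly one step of the staircase in this double complex: lifting a class in $\HH{n-k}{X}{Q_k}$ to a cochain valued in $M_{-n+k}$ and applying the differential moves one column up and one row down into $Q_{k-1}$. Composing the $n$ connecting maps therefore traces the full staircase from the corner $(p,q)=(0,0)$ to the corner $(p,q)=(-n,n)$.

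Finally I would match this with $\mathbf{H}^0(p)$. A class in $\bHH{0}{X}{M_\bullet}=\HH{0}{X}{\FF}$ is represented by a total cocycle whose components $\xi^{p,-p}$ (for $-n \le p \le 0$) are linked by the vanishing of the total differential, and these linking relations are precisely the inductive equations generated by the staircase above. The quotient map $p$ annihilates every component except the bottom one and returns the class of $\xi^{-n,n}$ in $\HH{n}{X}{M_{-n}}$; but $\xi^{-n,n}$ is exactly the output of the iterated connecting homomorphism applied to $[\xi^{0,0}]$. Hence $\mathbf{H}^0(p) = \delta$.

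The main obstacle lies in the bookkeeping of the last two steps: one must check that the sign conventions in the total differential agree with those built into the successive connecting maps, and, more delicately, that the identification $\bHH{0}{X}{M_\bullet}=\HH{0}{X}{\FF}$ (which requires a quasi-inverse to the augmentation) carries $[\xi^{0,0}]$ to the element of $\HH{0}{X}{\FF}$ one began with, so that source and target are correctly matched before comparing. Once these compatibilities are pinned down the two descriptions of the staircase agree term by term. An essentially equivalent route that sidesteps some of the sign issues is to induct on $n$, peeling off the top short exact sequence $0 \lra Q_{n-1} \lra M_0 \lra \FF \lra 0$ and using the quasi-isomorphism $\sigma_{\le -1}M_\bullet \simeq Q_{n-1}[1]$ to reduce to a resolution of $Q_{n-1}$ of length $n-1$.
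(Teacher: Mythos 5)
Your proof is correct, but your main argument takes a genuinely different route from the paper's --- while the inductive variant you mention only in your closing sentence is, in essence, the paper's actual proof. The paper never chooses a resolution: writing $(M_\bullet)_{\le -i}$ for the naive truncation in degrees $\le -i$, it factors the projection $M_\bullet \twoheadrightarrow M_{-n}[n]$ as the chain of quotient maps $(M_\bullet)_{\le 0} \lra (M_\bullet)_{\le -1} \lra \cdots \lra (M_\bullet)_{\le -n}$. In your notation $(M_\bullet)_{\le -i}$ is quasi-isomorphic to $Q_{n-i}[i]$, so $\mathbf{H}^0\big(X, (M_\bullet)_{\le -i}\big) = \HH{i}{X}{Q_{n-i}}$, and the paper's single observation (stated as a remark, with the verification left implicit --- it comes from the short exact sequence of complexes $0 \lra M_{-i}[i] \lra (M_\bullet)_{\le -i} \lra (M_\bullet)_{\le -(i+1)} \lra 0$) is that each one-step quotient induces on $\mathbf{H}^0$ exactly the connecting homomorphism of the sheaf sequence $0 \lra Q_{n-i-1} \lra M_{-i} \lra Q_{n-i} \lra 0$; composing over $i = 0, \ldots, n-1$ finishes the proof. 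The trade-off between the two routes is clear: the truncation argument is short, resolution-free and essentially sign-free, since every identification happens at the level of maps of complexes and functoriality of long exact sequences does all the work; your Cartan--Eilenberg staircase makes the mechanism completely explicit at the cocycle level, but it obliges you to discharge the two compatibilities you rightly flag --- the signs in the total differential versus those built into the iterated snake-lemma construction, and the fact that inverting the augmentation quasi-isomorphism really matches $[\xi^{0,0}]$ with the class in $\HH{0}{X}{\FF}$ one started from. Those checks are routine but not automatic, and they are precisely what the paper's factorization through truncations avoids.
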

\begin{proof}
Denote by $(M_{\bullet})_{\le -i}$ the naive truncation of $M_\bullet$, obtained by replacing $M_j$ with the zero sheaf for $j > -i$. There is an evident map $(M_{\bullet})_{\le -i} \lra (M_{\bullet})_{\le -(i+1)}$ of complexes that induces on $\mathbf{H}^0$ the connecting homomorphism
\[
H^i\Big( \, X \, , \, \mathcal{H}^{-i}\big((M_{\bullet})_{\le -i}\big)\Big) \lra H^{i+1}\Big(  \, X \, , \ \mathcal{H}^{-i-1}\big((M_{\bullet})_{\le -(i+1)}\big)\Big)
\]
determined by the short exact sequence of sheaves
\[  0 \lra  \mathcal{H}^{-i-1}\big((M_{\bullet})_{\le -(i+1)}\big) \lra M_{-i} \lra  \mathcal{H}^{-i}\big((M_{\bullet})_{\le -i}\big) \lra 0. \]
Applying this remark successively starting with $i = 0$, the Lemma follows.
\end{proof}

 %%%%%%%%%%%%%%%%
 %
 %
 %  REFERENCES
 %
 %
 %%%%%%%%%%%%%%%%%%%%%%%%%

 \end{document}